\renewcommand{\fnum@figure}{Fig. \thefigure}
\newcommand{\veca}{\mathbf{a}}
\newcommand{\f}{\mathbf{f}}
\newcommand{\x}{\mathbf{x}}
\newcommand{\y}{\mathbf{y}}
\newcommand{\z}{\mathbf{z}}
\newcommand{\vecb}{\mathbf{b}}
\newcommand{\vecc}{\mathbf{c}}
\newcommand{\vecx}{\mathbf{x}}
\newcommand{\vecu}{\mathbf{u}}
\newcommand{\vecv}{\mathbf{v}}
\newcommand{\X}{\mathcal{X}}
\newcommand{\Y}{\mathcal{Y}}
\newcommand{\Z}{\mathcal{Z}}
\newcommand{\A}{\mathbf{A}}
\newcommand{\R}{\mathbb{R}}
\newcommand{\Id}{\mathbf{I}}
\newcommand{\N}{\mathbb{N}}
\newcommand{\U}{\mathcal{U}}
\newcommand{\I}{\mathcal{I}}
\newcommand{\Paral}{\mathcal{P}}
\newcommand{\Band}{\mathcal{S}}
\newcommand{\boxx}{\mathcal{B}}
\newcommand{\B}{\mathbf{B}}
\newcommand{\classA}{\mathcal{A}}
\newcommand{\pbQ}{\mathcal{Q}}
\newcommand{\pbR}{\mathcal{R}}
\newcommand{\Emb}{\mathcal{E}}
\newcommand{\set}[1]{\left\lbrace{#1}\right\rbrace}
\newcommand{\telque}{\,,\,} 
\def\restriction#1#2{\mathchoice
              {\setbox1\hbox{${\displaystyle #1}_{\scriptstyle #2}$}
              \restrictionaux{#1}{#2}}
              {\setbox1\hbox{${\textstyle #1}_{\scriptstyle #2}$}
              \restrictionaux{#1}{#2}}
              {\setbox1\hbox{${\scriptstyle #1}_{\scriptscriptstyle #2}$}
              \restrictionaux{#1}{#2}}
              {\setbox1\hbox{${\scriptscriptstyle #1}_{\scriptscriptstyle #2}$}
              \restrictionaux{#1}{#2}}}
\def\restrictionaux#1#2{{#1\,\smash{\vrule height .8\ht1 depth .85\dp1}}_{\,#2}}
\DeclareMathOperator*{\argmin}{argmin}
\DeclareMathOperator*{\argmax}{argmax}
\DeclareMathOperator*{\Ran}{Ran}
\DeclareMathOperator*{\Vol}{Vol}
\newtheorem{Property}{Property}[section]
\newtheorem{Definition}{Definition}[section]
\newtheorem{Remark}{Remark}[section]
\newtheorem{Proposition}{Proposition}[section]
\newtheorem{theorem}{Theorem}
\title{On the choice of the low-dimensional domain for global optimization via random embeddings}
\author{Micka\"{e}l Binois\thanks{Corresponding author: The University of
Chicago Booth School of Business, 5807 S.~Woodlawn Ave., Chicago, IL, USA;
\href{mailto:mbinois@chicagobooth.edu}{\tt mbinois@chicagobooth.edu}}
\and David Ginsbourger
\thanks{Uncertainty Quantification and Optimal Design group, Idiap Research Institute, Centre
du Parc, Rue Marconi 19, PO Box 592, 1920 Martigny, Switzerland}
\thanks{IMSV, Department of Mathematics and Statistics,  University of Bern,  Alpeneggstrasse
22, 3012 Bern, Switzerland}
\and Olivier Roustant\thanks{EMSE Ecole des Mines de St-\'Etienne, UMR CNRS 6158, LIMOS, F-42023: 158 Cours
Fauriel, Saint-\'Etienne, France}
}
\date{\today}
\begin{document}
\maketitle

\begin{abstract}
The challenge of taking many variables into account in optimization problems may be overcome under the hypothesis of low effective dimensionality.
Then, the search of solutions can be reduced to the random embedding of a low dimensional space into the original one, resulting in a more manageable optimization problem.
Specifically, in the case of time consuming black-box functions and when the budget of evaluations is severely limited, global optimization with random embeddings appears as a sound alternative to random search. 
Yet, in the case of box constraints on the native variables, defining suitable bounds on a low dimensional domain appears to be complex.
Indeed, a small search domain does not guarantee to find a solution even under restrictive hypotheses about the function, while a larger one may slow down convergence dramatically.
Here we tackle the issue of low-dimensional domain selection based on a detailed study of the properties of the random embedding, giving insight on the aforementioned difficulties. 
In particular, we describe a minimal low-dimensional set in correspondence with the embedded search space. 
We additionally show that an alternative equivalent embedding procedure yields simultaneously a simpler definition of the low-dimensional minimal set and better properties in practice.        
Finally, the performance and robustness gains of the proposed enhancements for Bayesian optimization are illustrated on numerical examples.

\end{abstract}

\textbf{Keywords}: Expensive black-box optimization; low effective dimensionality; zonotope; REMBO; Bayesian optimization

\section{Introduction}

Dealing with many variables in global optimization problems has a dramatic impact on the search of solutions, along with increased computational times, see e.g., \cite{Rios2013}.
This effect is particularly severe for methods dedicated to black-box, derivative free expensive-to-evaluate problems.
The latter are crucial in engineering, and generally in all disciplines calling for complex mathematical models whose analysis relies on intensive numerical simulations.
Among dedicated methods, those from Bayesian Optimization (BO) rely on a surrogate model to save evaluations, such as Gaussian Processes (GPs). 
They have known a fantastic development in the last two decades, both in the engineering and machine learning communities, see e.g., \cite{Shahriari2016}.
Successful extensions include dealing with stochasticity \cite{Huang2006}, variable fidelity levels \cite{Courrier2016} as well as with
constrained and multi-objective setups \cite{Feliot2015}.\\

Now, standard implementation of such algorithms are typically limited in terms of dimensionality because of the type of covariance kernels often used in practice.
The root of the difficulty with many variables is that the number of observations required to
learn a function without additional restrictive assumptions increases exponentially with the dimension,
which is known as the ``curse of dimensionality'', see e.g., \cite{Donoho2000}, \cite{Hastie2005}.
Here, we consider the optimization problem with box-constraints:
\begin{equation}
\label{eq:opt_pb}
\text{find~} \x^{**} \in \argmin \limits_{\x \in \X} f(\x)
\end{equation}
where the search domain $\X = [-1,1]^D$ is possibly of very high-dimensionality, say up to billions of variables. 
In all the rest, we suppose that $f^{**} = \min_{\x \in \X} f(\x)$ exists.\\

Recently, a groundbreaking approach was proposed in the paper \cite{Wang2013}\footnote{see \cite{Wang2016} for the extended journal version}, that
relies on random embeddings. 
Under the hypothesis that the efficient dimension of the problem is much smaller than the number of inputs,
high-dimensional optimization problems can be provably solved using global optimization algorithms in moderate dimensions,
relying on random embeddings and related results from random matrix theory. 
However implementing such algorithms in the case of bounded domains can be quite inefficient if the low-dimensional domain is not carefully chosen in accordance
with the embedding, and if space deformations caused by such embedding are not accounted for.
Here we tackle both issues, with a main focus on the choice of the low-dimensional
domain, and instantiation in the BO framework.
Before developing the main results and applications in Sections \ref{sec:two} and \ref{sec:app}, 
let us present selected state-of-the-art works, and narrow down the present aim and scope in Sections \ref{sec:oneone} and \ref{sec:onetwo} respectively.

\subsection{Related works and the random embedding approach}
\label{sec:oneone}
Many authors have focused on methods to handle high-dimensionality, in several directions. 
Selecting few variables is a rather natural idea to get back to a moderate search space, as done e.g., in \cite{Song2007}, \cite{Neal1996}, \cite{Rasmussen2006} or \cite{Chen2012}. 
Another common strategy of dimension reduction is to construct a mapping from the high-dimensional research space to a smaller one, see e.g., \cite{Viswanath2011}, \cite{Liu2014} and references therein. 
Other techniques suppose that the black-box function is only varying along a low dimensional subspace, possibly not aligned with the canonical basis,
such as in \cite{Djolonga2013} or \cite{Garnett2014}, using low rank matrix learning. 
With few unknown active variables, \cite{Carpentier2012} proposes to combine compressed sensing with linear bandits for a more parcimonious optimization.
Lastly, incorporating structural assumptions such as additivity within GP models is another angle of attack, see \cite{Durrande2010}, \cite{Durrande2011}, \cite{Duvenaud2014}, \cite{Li2016}, \cite{Wang2018} and references therein. 
They enjoy a linear learning rate with respect to the problem dimensionality, as used e.g., in \cite{Kandasamy2015} or \cite{Ivanov2014}.\\ 

In most of the above references, a significant part of the budget is dedicated to uncover the structure of the black-box, 
which may impact optimization in the case of very scarce budgets.
In \cite{Wang2013}, with the Random EMbedding Bayesian Optimization (REMBO) method, a radically different point of view was adopted, 
by simply relying on a randomly selected embedding.
Even if the main hypothesis is again that the high-dimensional function only depends on a low-dimensional subspace of dimension $d$,
the so-called \emph{low effective dimensionality} property,
no effort is dedicated to learn it.
This strong property is backed by empirical evidence in many cases, see e.g., references in \cite{Wang2013}, \cite{Constantine2014} or in \cite{Iooss2015}.\\ 

The principle is to embed a low dimensional set - usually a box - $\Y \subseteq \R^d$ to $\X$,
using a randomly drawn matrix $\A \in \R^{D \times d}$ within the mapping $\phi$: $\y \in \Y \rightarrow p_\X(\A \y) \in \X$, where $p_\X$ denotes the convex projection onto $\X$. 
By doing so, the initial search space $\X$ is reduced to a fraction (possibly the totality) of the embedded space $\Emb := \phi(\R^d)$,
which remains fixed for the rest of the process. 
The corresponding transformed optimization problem writes:
\begin{equation}
\label{eq:pb_trans}
\text{find~} \x^* \in \argmin \limits_{\x \in \Emb} f(\x)
\end{equation}
which may seem unpractical when formulated directly in terms of the $\X$ space but can be grasped more intuitively when parametrized in terms of the $\Y$ space:
$$(\pbR): \text{~find~} \y^* \in \Y \subseteq \R^d \text{~such~that~} f(\phi(\y^*)) = f(\x^*).$$
Conditions such that $f^* := f(\x^*) = f^{**}$ are addressed in \cite{Wang2013} and relaxed, e.g., in \cite{Qian2016}.
Notably, solutions coincide if the influential subspace is spanned by variables of $\X$, i.e., most variables do not have any influence.
Here we rather focus on ensuring that there exists $\y^* \in \Y$ such that $f(\phi(\y^*)) = f^*$, through the definition of $\Y$ in problem $(\pbR)$ as detailed in Section \ref{sec:onetwo}.
Remarkably, any global optimization algorithm can potentially be used for solving $(\pbR)$. In the application section we focus specifically on GP-based BO methods.  
\\

To fix ideas, Fig. \ref{fig:ex_intro} is an illustration of the random embedding principle as well as of the various sets mentioned so far. 
On this example with $D = 2$, the original function is defined on $\X = [-1,1]^2$ with a single unknown active variable ($d = 1$). 
Even if the search for the optimum is restricted to the red broken line with problem ($\ref{eq:pb_trans}$), a solution to problem $(\ref{eq:opt_pb})$ can still be found under the settings of problem $(\pbR)$.


\begin{figure}[htpb]
\def\svgwidth{20cm}

\begingroup%
  \makeatletter%
  \providecommand\color[2][]{%
    \errmessage{(Inkscape) Color is used for the text in Inkscape, but the package 'color.sty' is not loaded}%
    \renewcommand\color[2][]{}%
  }%
  \providecommand\transparent[1]{%
    \errmessage{(Inkscape) Transparency is used (non-zero) for the text in Inkscape, but the package 'transparent.sty' is not loaded}%
    \renewcommand\transparent[1]{}%
  }%
  \providecommand\rotatebox[2]{#2}%
  \ifx\svgwidth\undefined%
    \setlength{\unitlength}{604bp}%
    \ifx\svgscale\undefined%
      \relax%
    \else%
      \setlength{\unitlength}{\unitlength * \real{\svgscale}}%
    \fi%
  \else%
    \setlength{\unitlength}{\svgwidth}%
  \fi%
  \global\let\svgwidth\undefined%
  \global\let\svgscale\undefined%
  \makeatother%
  \begin{picture}(1,0.41390728)%
    \put(0,0){\includegraphics[width=\unitlength]{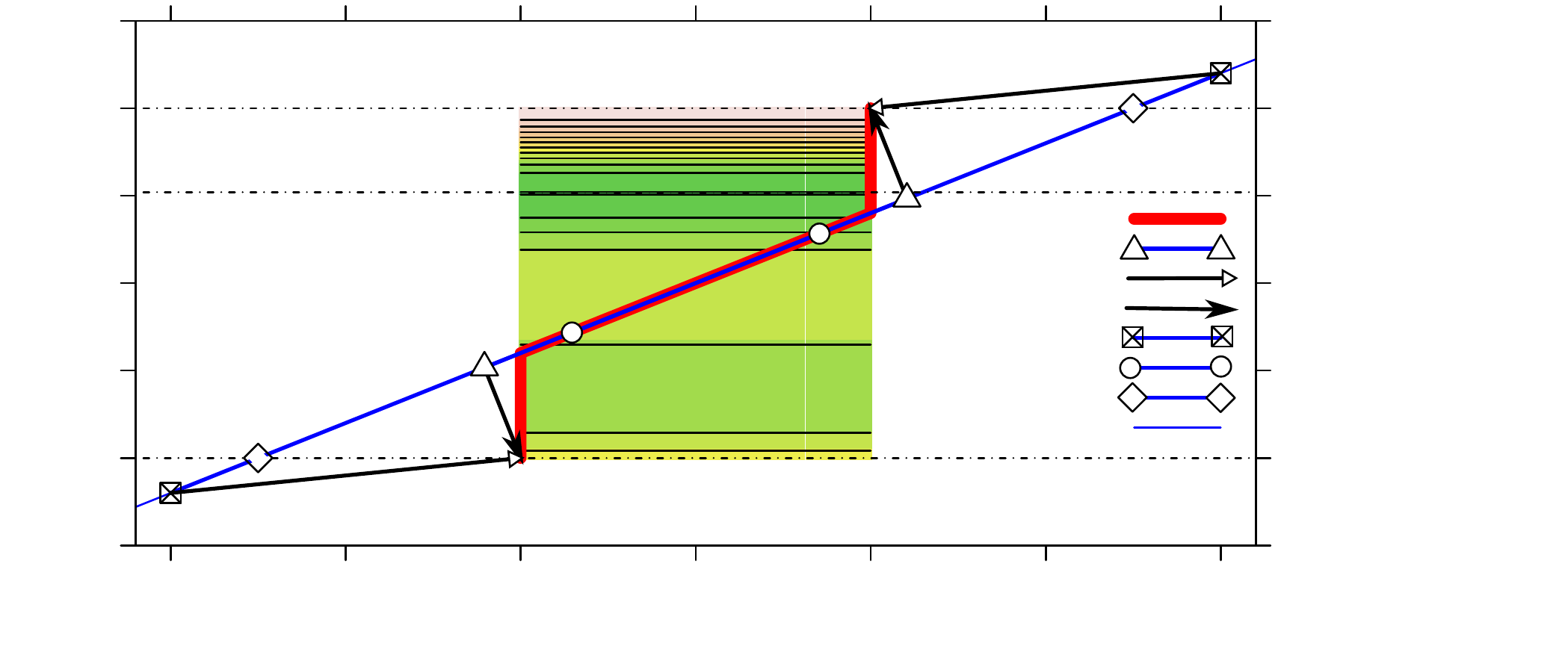}}%
    \put(0.43518212,0.00529801){\color[rgb]{0,0,0}\makebox(0,0)[lb]{\smash{$x_1$}}}%
    \put(0.01210265,0.22450331){\color[rgb]{0,0,0}{\makebox(0,0)[lb]{\smash{$x_2$}}}}%
    \put(0.5,0.2){\color[rgb]{0,0,0}{\makebox(0,0)[lb]{\smash{$\X$}}}}%
    \put(0.03506623,0.11554636){\color[rgb]{0,0,0}\makebox(0,0)[lb]{\smash{-1.0}}}%
    \put(0.03506623,0.17137417){\color[rgb]{0,0,0}\makebox(0,0)[lb]{\smash{-0.5}}}%
    \put(0.0447351,0.22721854){\color[rgb]{0,0,0}\makebox(0,0)[lb]{\smash{0.0}}}%
    \put(0.0447351,0.28304636){\color[rgb]{0,0,0}\makebox(0,0)[lb]{\smash{0.5}}}%
    \put(0.0447351,0.33887417){\color[rgb]{0,0,0}\makebox(0,0)[lb]{\smash{1.0}}}%
    \put(0.09942053,0.035){\color[rgb]{0,0,0}\makebox(0,0)[lb]{\smash{-3}}}%
    \put(0.21107616,0.035){\color[rgb]{0,0,0}\makebox(0,0)[lb]{\smash{-2}}}%
    \put(0.32273179,0.035){\color[rgb]{0,0,0}\makebox(0,0)[lb]{\smash{-1}}}%
    \put(0.43923841,0.035){\color[rgb]{0,0,0}\makebox(0,0)[lb]{\smash{0}}}%
    \put(0.55089404,0.035){\color[rgb]{0,0,0}\makebox(0,0)[lb]{\smash{1}}}%
    \put(0.66256623,0.035){\color[rgb]{0,0,0}\makebox(0,0)[lb]{\smash{2}}}%
    \put(0.77422185,0.035){\color[rgb]{0,0,0}\makebox(0,0)[lb]{\smash{3}}}%
    \put(0.09450331,0.2840894){\color[rgb]{0,0,0}\makebox(0,0)[lb]{\smash{$x_2^*$}}}%
    \put(0.63523015,0.26706036){\color[rgb]{0,0,0}\makebox(0,0)[lb]{\smash{$\Emb$}}}%
    \put(0.63523015,0.24803718){\color[rgb]{0,0,0}\makebox(0,0)[lb]{\smash{$p_\A(\X)$}}}%
    \put(0.63523015,0.22899744){\color[rgb]{0,0,0}\makebox(0,0)[lb]{\smash{$\phi$}}}%
    \put(0.63523015,0.20997426){\color[rgb]{0,0,0}\makebox(0,0)[lb]{\smash{$\gamma$}}}%
    \put(0.63523015,0.19093453){\color[rgb]{0,0,0}\makebox(0,0)[lb]{\smash{$\A\Y_1$}}}%
    \put(0.63523015,0.17189479){\color[rgb]{0,0,0}\makebox(0,0)[lb]{\smash{$\A\Y_2$}}}%
    \put(0.63523015,0.15287161){\color[rgb]{0,0,0}\makebox(0,0)[lb]{\smash{$\A\Y^*$}}}%
    \put(0.63523015,0.13383188){\color[rgb]{0,0,0}\makebox(0,0)[lb]{\smash{$\Ran(\A)$}}}%
  \end{picture}%
\endgroup%

\caption{Example with $d=1$ and $D = 2$. 
The filled level lines are those of a function defined on $\X$ depending only on its second variable, whose optimal value is highlighted by the dashed line (at $x_2 \approx 0.52$).
The image of the sets $\Y_1 = [-6, 6]$, $\Y_2 = [-1, 1]$ and $\Y^* = [-5, 5]$ by the matrix $\A = [0.5, 0.2]^\top$ are delimited on $\Ran(\A)$ with symbols.
The extent of the orthogonal projection of $\X$ onto $\Ran(\A)$ is delimited by triangles.
}
\label{fig:ex_intro}
\end{figure}

\subsection{Motivation: Limits of random embeddings}
\label{sec:onetwo}

If the random embedding technique with problem $(\pbR)$ has demonstrated its practical efficiency in several test cases \cite{Wang2013,Binois2015a},
it still suffers from practical difficulties, related to the definition of $\Y$.
The first one, as discussed in \cite{Wang2013}, is non-injectivity due to the convex projection, i.e., distant points in $\Y$ may have the same image in $\Emb$. 
In Fig. \ref{fig:ex_intro}, this is the case with $\Y_1$, for the portions between the diamonds and crossed boxes. 
As a consequence, taking $\Y$ too large makes the search of solutions harder.
Preventing this issue is possible by several means, such as using high-dimensional distance information.
For GPs, it amounts to consider distances either on $\Emb$ within the covariance kernel as originally suggested,
or, as proposed by \cite{Binois2015a}, on the orthogonal projection $p_\A$ of $\Emb$ onto the random linear subspace spanned by $\A$, i.e., onto $\Ran(\A)$.
The advantage of the latter is to remain low-dimensional.\\

The example in Fig. \ref{fig:ex_intro} highlights another remaining difficulty, also mentioned e.g., in \cite{Kandasamy2015,Shahriari2016}:
if $\Y$ is too small (e.g., with $\Y_2$), there may be no solution to ($\pbR$).  
As of now, only empirical rules have been provided, with fixed bounds for $\Y$ (giving the too small $\Y_2$ in this example).
A possible workaround is to use several embeddings, preferably in parallel \cite{Wang2013}. 
We argue that it may also be a sound option to keep a single one, benefiting of parallelism in solving problem ($\ref{eq:pb_trans}$) instead.\\

\subsection{Contributions}

For a given random matrix $\A$, the selection of a set $\Y$ in solving problem ($\pbR$) balances between efficiency (in favor of a smaller search space), practicality (in its description) and robustness (i.e., contains a solution of ($\ref{eq:pb_trans}$)).   
Consequently, we focus on the question $(\pbQ)$ of taking $\Y$ as a set of smallest volume still ensuring that solutions of ($\pbR$) and $(\ref{eq:pb_trans})$ are equivalent,
i.e., 
$$(\pbQ): \text{find}~ \Y^* \text{ such that } \Vol(\Y^*) = \inf \limits_{\Y \subset \R^d,~\phi(\Y) = \Emb} \Vol(\Y).$$

Based on an extensive description of sets related to the mapping, we exhibit a solution of ($\pbQ$), the set $\U$,
while providing additional insight on the difficulties encountered in practice.
In Fig. \ref{fig:ex_intro}, the unique (in this $d=1$ case) optimal $\Y^* = \U = [-5,5]$ maps to the portion of $\Ran(\A)$ between the two diamonds, 
whose convex projections on $\X$ are the extremities of $\Emb$ (as is also the case for any point further away from $O$). 
Unfortunately this strategy does not generalize well in higher dimensions, where the description of $\U$ becomes cumbersome, due to the convex projection component of the mapping $\phi$.\\

A first attempt to alleviate the impact of the convex projection step proposed by \cite{Binois2015a} is to rely on an additional orthogonal projection step.
Here, we extend this approach to completely by-pass the convex projection. 
That is, instead of associating a point of $\Ran(\A)$ with its convex projection on $\X$,
we propose to associate a point of $p_\A(\Emb) \subset \Ran(\A)$ with its pre-image in $\Emb$ by $p_\A$, informally speaking \emph{inverting} the orthogonal projection -- \emph{back-projecting} in short. 
Expressed in a basis of $\Ran(\A)$, denoted by the $d \times D$ matrix $\B$, coordinates are $d$-dimensional.
Then the embedding procedure is still a mapping between $\R^d$ and $\Emb$, denoted by $\gamma: \B p_\A(\Emb) \subset \R^d \to \Emb$. 
Most of the present work is dedicated to study the validity and applicability of this back-projection.
The corresponding alternative formulations of $(\pbR)$ and $(\pbQ)$ write:
$$(\pbR'): \text{~find~} \y^* \in \Y \subseteq \R^d \text{~such~that~} f(\gamma(\y^*)) = f^*$$
and 
$$(\pbQ'): \text{find}~ \Y^* \text{ such that } \Vol(\Y^*) = \inf \limits_{\Y \subset \R^d,~\gamma(\Y) = \Emb} \Vol(\Y).$$
The benefits of these formulations are, first, that a solution of $(\pbQ')$ is by construction $\Y^* = \Z := \B p_\A(\X)$
and, second, that they enjoy better properties from an optimization perspective.
Back to Fig. \ref{fig:ex_intro}, $p_\A(\X)$ is delimited by the two triangles.\\

Finally, these enhancements are adapted for Bayesian optimization, via the definition of appropriate covariance kernels. 
They achieve significant improvements, with comparable or better performance than the initial version of REMBO on a set of examples, while discarding the risk of missing the optimum in $\Emb$.\\

The remainder of the article is as follows.
In the subsequent Section \ref{sec:two} are presented our main results towards question ($\pbQ$), both in its original setup and in the alternative one ($\pbQ'$).
A main contribution of this work is to explicitly write the sets $\U$, $\Z$, and the alternative mapping $\gamma$, depending on the matrix $\A$.
While these results are of general interest for global optimization with random embedding regardless of the base optimization algorithm,
Section \ref{sec:app} is dedicated to the particular case of Bayesian optimization with random embeddings on various experiments. Section \ref{sec:ccl} concludes the article.


\section{Minimal sets solutions to questions \texorpdfstring{($\pbQ$)}{(Q)} and \texorpdfstring{($\pbQ'$)}{(Q')}}
\label{sec:two}

Throughout this section, we consider that the matrix $\A$ is given, and for simplicity that it belongs to the following class of matrices, denoted $\classA$:
$$\classA = \set{\A \in \R^{D \times d} \text{~such~that~any~} d \times d \text{~extracted~submatrix~is~invertible~(i.e.,~of~rank~$d$)}}.$$
This mild condition is ensured with probability one for random matrices with standard Gaussian i.i.d.\ entries, as used in \cite{Wang2013}.
Before discussing the relative merits of problems ($\pbR$) and ($\pbR'$), we start by exhibiting sets of interest in $\Y$, $\X$ and $\Ran(\A)$ in both cases.

\subsection{A minimal set in \texorpdfstring{$\R^d$}{Rd} mapping to \texorpdfstring{$\Emb$}{E}}
\label{ssec:U}

Until now, the description of the set $\Y$ has been relatively vague -- \cite{Wang2016} states that there is room for improvement.
This question is settled in \cite{Wang2013} by setting $\Y = [-\sqrt{d}, \sqrt{d}]^d$, while \cite[Theorem 3]{Wang2013} only ensures to find a solution to problem $(\pbR)$ with probability 1 in the particular case where $\Y = \R^d$. 
On the other hand, the sets $\Emb$ and $p_\A(\X)$ are fixed and well defined given $\A$. 
This motivates us to describe a new set $\U \subset \R^d$, containing a solution of $(\pbR)$, of minimal volume, and that can also be described from $\A$.\\

To this end, consider the low-dimensional space $\R^d$. Denote by $H_{\textbf{a}, \delta}$ the hyperplane in $\R^d$ with normal vector $\textbf{a} \in \R^d$ and offset $\delta \in \R$: $H_{\textbf{a}, \delta} = \set{\y \in \R^d \telque \langle \textbf{a} , \y \rangle = \delta}$.  
Our analysis in the low dimensional space begins by a general definition of 
\emph{strips}.

\begin{Definition}
We call \emph{strip} with parameters $a \in \R^d$ and $\delta \in \R$, denoted by $\Band_{\textbf{a}, \delta}$ the
 set of points between the parallel hyperplanes $H_{\textbf{a},-\delta}$ and $H_{\textbf{a},\delta}$: $\Band_{\textbf{a}, \delta} = \set{\y \in \R^d \telque | \langle \textbf{a}, \y \rangle | \leq |\delta|}$.
\end{Definition}

Let us now consider hyperplanes with normal vectors given by the rows of a matrix $\A \in \R^{D\times d}$ and with fixed $\delta = 1$. The $D$ corresponding strips, now simply denoted $\Band_i$, are given by:
$$\Band_i = \left\{\y \in \R^d, -1 \leq \A_i \y \leq 1 \right\}.$$
The intersection of all strips $\Band_i$ is denoted $\I$. It corresponds to the pre-image of $\X \cap \Ran(\A)$ by $\A$:
 $$\I = \bigcap \limits_{i=1}^D \Band_i  = \set{\y \in \R^d, \forall i = 1, \dots, D: -1 \leq \A_i \y \leq 1} = \set{\y \in \R^d \telque p_\X(\A \y) = \A \y}.$$
 
 Of interest will also be intersections of $d$ strips, corresponding to \emph{parallelotopes}, i.e., linear transformations of a $d$-cube in a $d$-dimensional subspace, see e.g., \cite{Le2013}. In particular, using set notations $I = \set{i_1, \dots, i_d} \subseteq \set{1, \dots, D}$, denote the parallelotope with strips $I$
 $$\Paral_I = \set{\y \in \R^d, \forall i \in I: -1 \leq \A_i \y \leq 1} = \bigcap \limits_{i \in I} \Band_i .$$
 
There are $\binom{D}{d}$ different parallelotopes $\Paral_I$. We thus consider their union, which is referred to as $\U$:
$$\U = \bigcup \limits_{I \subseteq \{1, \dots, D\}, |I| = d} \Paral_I$$
where $|I|$ is the size of $I$. In fact we show in the following Theorem \ref{prop:ZH} that $\U$ is the smallest closed set such that the map $\restriction{\phi}{\U}: \U \to \Emb$, $\y \mapsto p_\X(\A \y)$ is surjective.

\begin{theorem}
\label{prop:ZH}
If $\A \in \classA$, then $\U$ is the smallest closed set $\Y \subseteq \R^d$ such that $p_\X(\A \Y) = \Emb$. Furthermore, $\U$ is a compact and star-shaped set with respect to every point in $\I$.
\end{theorem}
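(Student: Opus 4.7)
The statement contains four assertions: (i) $\phi(\U) = \Emb$, (ii) every closed $\Y \subseteq \R^d$ with $\phi(\Y) = \Emb$ contains $\U$, (iii) $\U$ is compact, and (iv) $\U$ is star-shaped with respect to every point of $\I$. I would treat them in that order, exploiting throughout that $\A \in \classA$ means every $d$ distinct rows of $\A$ are linearly independent. For (i), fix $\x \in \Emb$ and consider the closed polyhedron $P := \phi^{-1}(\x) \subseteq \R^d$. Writing $E = \set{i \telque |x_i| < 1}$, membership in $P$ is the conjunction of the equalities $(\A\y)_i = x_i$ for $i \in E$ and the inequalities $x_i (\A\y)_i \geq 1$ for $i \notin E$. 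The goal is to show that $P$ contains no affine line, so that it admits at least one vertex, and then to verify that every vertex of $P$ already lies in $\U$. The no-line claim follows from a recession-cone computation: if $\vecv$ and $-\vecv$ both belong to the recession cone of $P$, then $(\A\vecv)_E = 0$ and $(\A\vecv)_i x_i = 0$ for $i \notin E$; since $|x_i|=1$ at those indices, one obtains $\A \vecv = 0$ and hence $\vecv = 0$ by the full column rank of $\A$. At any vertex $\y^\prime$ of $P$ there are at least $d$ tight constraints; the $\classA$ hypothesis guarantees that any $d$ of the associated normals $\A_i$ are linearly independent, so one can extract an index set $I$ of size $d$ with $(\A\y^\prime)_i = x_i$ for $i \in I$. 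In particular $|(\A\y^\prime)_i| \leq 1$ for $i \in I$, so $\y^\prime \in \Paral_I \subseteq \U$.

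\textbf{Minimality.} For (ii), the key observation is a uniqueness-of-preimage lemma: whenever $\x \in \Emb$ satisfies $|E| \geq d$, the system $\A_E \y = \x_E$ is consistent (since $\x \in \Emb$) and, by the $\classA$ hypothesis, has a unique solution, so $\phi^{-1}(\x)$ is a singleton. Given $\y_0 \in \U$, I would introduce the sequence $\y_k := (1-1/k) \y_0$ for $k \geq 2$. Two short checks then suffice. First, if $I$ is any index set of size $d$ with $\y_0 \in \Paral_I$, then $|(\A \y_k)_i| = (1-1/k)|(\A \y_0)_i| \leq 1 - 1/k < 1$ for $i \in I$, so $\y_k \in \Paral_I \subseteq \U$. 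Second, $\set{i \telque |(\A \y_k)_i| < 1}$ contains $\set{i \telque |(\A \y_0)_i| \leq 1} \supseteq I$, a set of size at least $d$. Thus $\y_k$ is the unique $\phi$-preimage of $\phi(\y_k)$, and since $\phi(\y_k) \in \Emb = \phi(\Y)$, the set $\Y$ must contain that preimage, namely $\y_k$. Closedness of $\Y$ together with $\y_k \to \y_0$ then yields $\y_0 \in \Y$.

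\textbf{Compactness, star-shape, and main obstacle.} Each $\Paral_I = \A_I^{-1}([-1,1]^d)$ is the continuous image of a compact cube under the linear map $\A_I^{-1}$, hence compact; $\U$ is the finite union of such parallelotopes and is therefore compact. For the star-shape property, take $\y_0 \in \I$ and $\y \in \U$: there exists $I$ with $\y \in \Paral_I$, and $\I \subseteq \Paral_I$ gives $\y_0 \in \Paral_I$, so the segment $[\y_0, \y]$ lies in the convex set $\Paral_I$ and hence in $\U$. I expect the delicate step of the whole argument to be the recession-cone computation in the surjectivity part: without the $\classA$ assumption, the preimage polyhedron $P$ could contain an affine line and fail to possess any vertex, so the extraction of an element of $\U$ from an arbitrary $\x \in \Emb$ would break down.
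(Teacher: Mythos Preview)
Your proof is correct, and the minimality and topological parts (ii)--(iv) match the paper's argument almost exactly: the paper also isolates the open set $\U^* = \bigcup_I \mathring{\Paral}_I$ on which $\phi$ is injective (your sequence $\y_k = (1-1/k)\y_0$ is just an explicit way of approaching $\y_0$ from within $\U^*$), and the compactness and star-shape claims are handled identically.

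The genuine difference is in the surjectivity argument (i). The paper proceeds constructively: starting from an arbitrary $\y$ with $\phi(\y)=\x$, it reduces by symmetry to the case $\A\y \in [0,\infty)^D$, and then, if fewer than $d$ coordinates of $\A\y$ lie in $[-1,1]$, it builds an auxiliary $\y'$ by solving a $d\times d$ system and forms a convex combination $\lambda \y' + (1-\lambda)\y$ with $\lambda$ chosen so that exactly one more coordinate hits the value $1$. Iterating this step increases the count of coordinates in $[-1,1]$ until $d$ are reached, at which point the resulting point sits in some $\Paral_I$. Your route is non-constructive but cleaner: you observe that the fibre $P = \phi^{-1}(\x)$ is a polyhedron whose lineality space is $\ker \A = \{0\}$, hence $P$ is pointed and has a vertex, and at any vertex at least $d$ of the defining constraints are tight, which immediately places it in some $\Paral_I$. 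This bypasses the sign reduction and the inductive interpolation entirely, and makes transparent why the $\classA$ hypothesis (full rank of every $d\times d$ submatrix, hence in particular full column rank of $\A$) is exactly what is needed. The paper's argument, on the other hand, is algorithmic: it tells you how to move from a given preimage to one lying in $\U$, which could be of independent interest if one wanted to compute such preimages in practice.
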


The detailed proof is can be found in Appendix \ref{ap:proofth1}.\\

In other words, if we choose $\Y \supseteq \U$, then a solution to problem $(\pbR)$ and $(\ref{eq:pb_trans})$ can be found.
Yet, from its definition as a union of intersections, the set $\U$ is unpractical to directly work with. 
Indeed, it is more common practice to work on simpler sets such as boxes instead of star-shaped sets.  
Based upon \cite[Theorem 3]{Wang2013}, their choice of $\Y = [-\sqrt{d}, \sqrt{d}]^d$ originates from results on the radius of a parallelotope $\Paral_I$,
corresponding to a probability greater than $1 - \varepsilon$ of containing a solution, with $\varepsilon = \log(d)/\sqrt{d}$ (see the discussion in \cite{Wang2016}).
Unfortunately, there is no such result for any matrix in general nor for the maximum radius of a parallelotope in which one could wish to enclose $\U$. 
Yet, it is still easy to detect whether a given point is in $\U$ or not: if no more than $d$ components of $\A \y$ are superior to one in absolute value. 
Hence selecting a large $\Y$ such that $\U \subseteq \Y$ is always possible,
even though it may prove to be extremely large and counterproductive.\\ 

As a by-product, the proof of Theorem \ref{prop:ZH} gives a possibility to find pre-images in $\Y$ for elements of $\Emb$:
letting $\x \in \Emb$, pre-image(s) in $\U$ are solutions of the following system of linear (in)equations: 
$$\text{find}~\y \in \U \text{ s.t. }\left\{ 
\begin{array}{ll}
\A_J \y&=  \x_J \\
\A_K \y &\geq \mathbf{1}_{|K|}\\
\A_L \y &\leq  -\mathbf{1}_{|L|} 
\end{array}
\right.
$$
where $J$, $K$ and $L$ are the sets of components of $\x$ such that $|x_i| \leq 1$, $x_i > 1$ and $x_i < -1$ respectively, with $\mathbf{1}_{|K|} = (1, \dots, 1)^\top$ of length $|K|$.  
A solution to this problem exists by Theorem \ref{prop:ZH}, several may even exist if $|J| < d$.\\

To sum up, we have highlighted three different sets of interest: parallelotopes $\Paral_I$, the intersection of all of them $\I$, and their union $\U$. The sets $\U$, $\Paral_I$ and $\I$ are illustrated with $d = 2$ in Fig. \ref{fig:ZH}. On the top figures, (a), strips are marked by lines. 
Next, we conduct a similar analysis of with the mapping $\gamma$.

\subsection{Bijection between \texorpdfstring{$\Emb$}{E} and \texorpdfstring{$\Z$}{Z}}
\label{ssec:Z}

The core idea behind formulations $(\pbR')$ and $(\pbQ')$ is, through using $\Ran(\A)$ as low-dimensional domain, 
to replace the convex projection by an inversion of the orthogonal projection, 
\emph{in fine} replacing the mapping $\phi$ by another, $\gamma$, with better properties.
It is worth insisting that the search for a minimum occurs on the same set $\Emb$ in the original high-dimensional domain $\X$.\\

A core set here is the one obtained by projection of $\X$ onto $\Ran(\A)$, which is known to be a zonotope, a special class of convex centrally symmetric polytopes, see Definition \ref{def:zon} and e.g., in \cite{McMullen1971}, \cite{Ziegler1995} or \cite{Le2013}.

\begin{Definition}[Zonotope as hypercube projection, adapted from \cite{Le2013}]
\label{def:zon}
A (centered) $D$-zonotope in $\R^d$ is the image of the $[-1,1]^D$ hypercube $\X$ by a linear mapping. Given a matrix $\B \in \R^{d \times D}$ representing the linear mapping, the zonotope $\mathcal{Z}$ is defined by $\mathcal{Z} = \B \X$.
\end{Definition}

This representation of a zonotope is known as its \emph{generator} representation, 
while it can also be described by vertices enumeration or hyperplane intersections like any other convex set, see e.g., \cite{Krein1940}. 
They provide a very compact representation of sets, which is useful for instance in set estimation, see e.g., \cite{Le2013}.\\ 

In the following, we assume that rows of $\B$ form an orthonormal basis of $\Ran(\A)$ in $\R^D$, i.e., $\B \B^\top = \Id_d$, with $\Id_d$ the identity matrix of size $d\times d$. 
The orthogonal projection onto $\Ran(\A)$ then simply writes: $p_\A = \B^\top \B$ \cite{Meyer2000}.
Let us point out that without the orthonormality condition, expressions involve pseudo-inverses.
Now, as we aim to define a mapping between $\Z$ and $\Emb$, a key element given by Proposition \ref{prop:eq_zon} is that the orthogonal projection of the set $\Emb$ onto $\Ran(\A)$ 
actually coincides with the one of $\X$ onto $\Ran(\A)$, i.e., $\B^\top \Z$. 
It thus inherits the properties of a zonotope.

\begin{Proposition}

$p_\A(\X) = p_\A(\Emb)$, or equivalently, $\B \Emb = \B\X = \Z$.
\label{prop:eq_zon}
\end{Proposition}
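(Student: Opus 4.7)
My plan is to prove the two inclusions of $p_\A(\X) = p_\A(\Emb)$ separately. The easy direction $p_\A(\Emb) \subseteq p_\A(\X)$ is immediate from $\Emb \subseteq \X$ (by construction, $\phi$ takes values in $\X$ via the convex projection $p_\X$) together with the linearity of $p_\A$.

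For the reverse inclusion, I plan to argue constructively. Given $\x \in \X$, let $\mathbf{u} := p_\A(\x) \in \Ran(\A)$; I would choose $\x^*$ as the unique minimizer of $\|\x' - \mathbf{u}\|^2$ over the polyhedron $C := \X \cap (\mathbf{u} + \Ran(\A)^\perp)$. This fiber is nonempty (it contains $\x$), compact, and convex, and the minimizer exists and is unique by strict convexity. By construction $\x^* \in \X$ and $p_\A(\x^*) = \mathbf{u}$, so the whole argument reduces to showing $\x^* \in \Emb$, i.e., $\x^* = p_\X(\z)$ for some $\z \in \Ran(\A)$.

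To see this, I would write down the first-order optimality condition $2(\mathbf{u} - \x^*) \in N_C(\x^*)$, where $N_C$ denotes the normal cone. Since both $\X$ and $\mathbf{u} + \Ran(\A)^\perp$ are polyhedral, standard convex analysis yields the decomposition $N_C(\x^*) = N_\X(\x^*) + \Ran(\A)$ (no constraint qualification beyond polyhedrality is needed, since the normal cone of the affine part is $\Ran(\A)$). Thus one can write $\mathbf{u} - \x^* = \mathbf{n} + \mathbf{r}$ with $\mathbf{n} \in N_\X(\x^*)$ and $\mathbf{r} \in \Ran(\A)$. The candidate $\z := \x^* + \mathbf{n}$ then satisfies the two required conditions at once: first, $\z - \x^* = \mathbf{n} \in N_\X(\x^*)$, which is precisely the variational characterization of $\x^* = p_\X(\z)$ for the convex projection onto $\X$; second, $\z = \x^* + (\mathbf{u} - \x^* - \mathbf{r}) = \mathbf{u} - \mathbf{r} \in \Ran(\A)$. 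Hence $\x^* = p_\X(\z) \in p_\X(\Ran(\A)) = \Emb$, and $p_\A(\x^*) = \mathbf{u} = p_\A(\x)$, as needed.

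The main hurdle I anticipate is finding the right choice of $\x^*$: naive candidates such as $p_\X(\mathbf{u})$ (which may lie in a wrong fiber when $\mathbf{u} \notin \X$) or an arbitrary vertex of $C$ do not lead to a $\z \in \Ran(\A)$ via the same Lagrangian-type reasoning. What makes the above work is the alignment between the gradient $2(\x^* - \mathbf{u})$, which automatically lies in $\Ran(\A)^\perp$ since $\x^* \in \mathbf{u} + \Ran(\A)^\perp$, and the decomposition of $N_C(\x^*)$ into contributions from $\X$ and $\Ran(\A)$. The equivalent formulation $\B\Emb = \B\X = \Z$ then follows immediately from the set identity by using $p_\A = \B^\top \B$ and $\ker(\B) = \Ran(\A)^\perp$.
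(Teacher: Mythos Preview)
Your argument is correct and takes a genuinely different route from the paper. The paper's proof is geometric and proceeds in two stages: it first shows that every \emph{vertex} of the zonotope $p_\A(\X)$ has a preimage in $\Emb$ (arguing, for a vertex $\vecv$ of $\X$ projecting to a vertex of the zonotope, that if $\vecv\notin\Emb$ then the open hyper-octant containing $\vecv$ misses $\Ran(\A)$, which contradicts $p_\A(\vecv)$ being the nearest point of $\Ran(\A)$ to $\vecv$), and then extends to arbitrary points of $p_\A(\X)$ by a connectedness/continuity argument along the segment to the nearest vertex. Your approach, by contrast, is a clean one-shot convex-analysis argument: you pick $\x^*$ as the point of the fiber $\X\cap(\mathbf{u}+\Ran(\A)^\perp)$ closest to $\mathbf{u}$ and read off a suitable $\z\in\Ran(\A)$ directly from the polyhedral normal-cone sum rule $N_C(\x^*)=N_\X(\x^*)+\Ran(\A)$. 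This is more systematic and avoids the somewhat informal continuity step in the paper. It is also worth noting that your $\x^*$ is exactly the paper's $\gamma(\B\x)$; in effect you are proving directly that $\gamma$ lands in $\Emb$, which in the paper is the opening step of Theorem~\ref{th:2} and is deduced there \emph{from} the present proposition. So your argument not only establishes Proposition~\ref{prop:eq_zon} but also short-circuits part of the later development.
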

\begin{proof}
Please refer to Appendix \ref{ap:proof2}.
\end{proof}

To provide some intuition about the ideas and sets involved in this Section \ref{ssec:Z}, compared to those of Section \ref{ssec:U}, they are illustrated with an example in Fig. \ref{fig:ZH}, panel (d).\\

\begin{figure}%
\centering%
\begin{subfigure}[t]{\linewidth}%
\centering%
\includegraphics[width=0.5\textwidth]{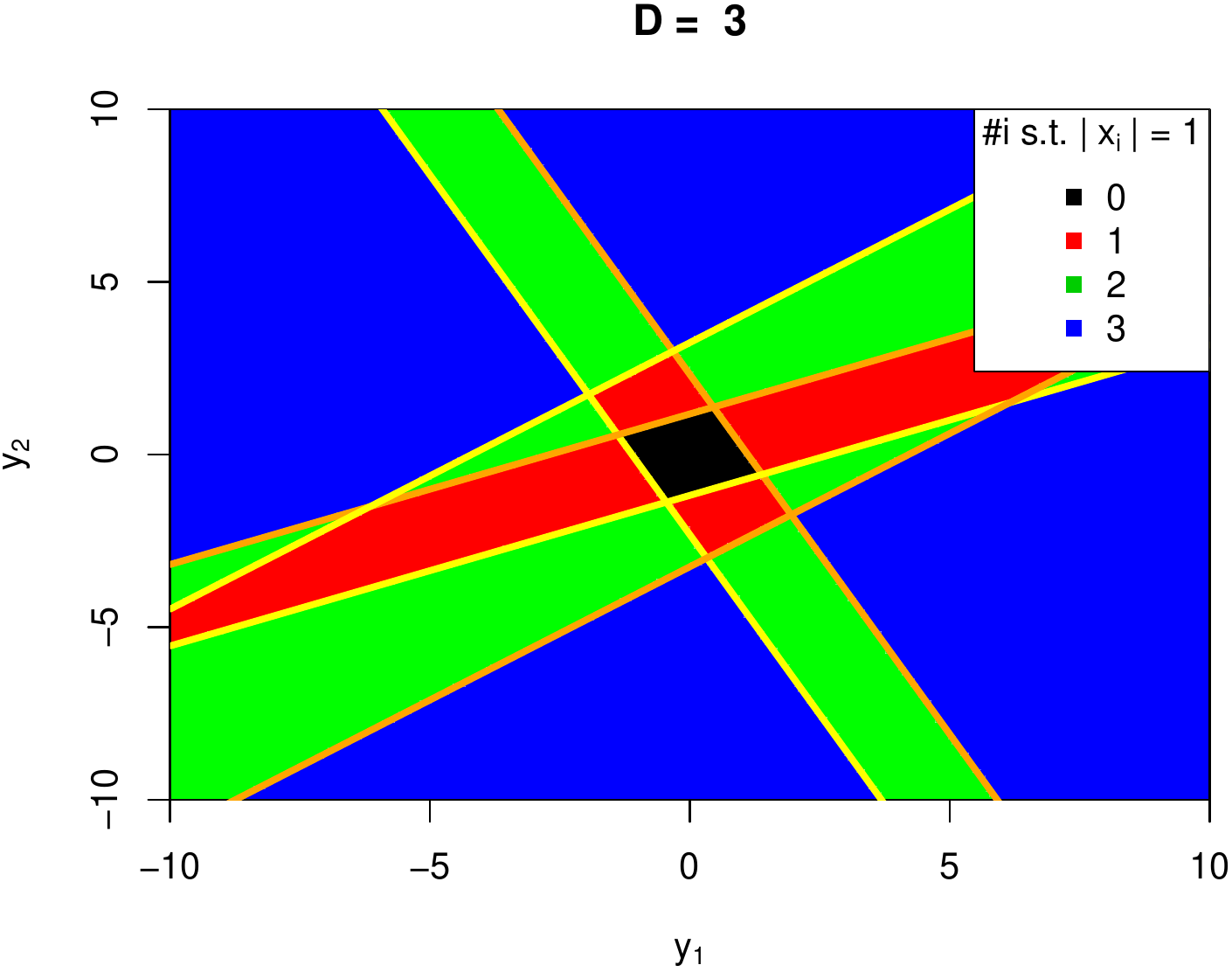}%
\includegraphics[width=0.5\textwidth]{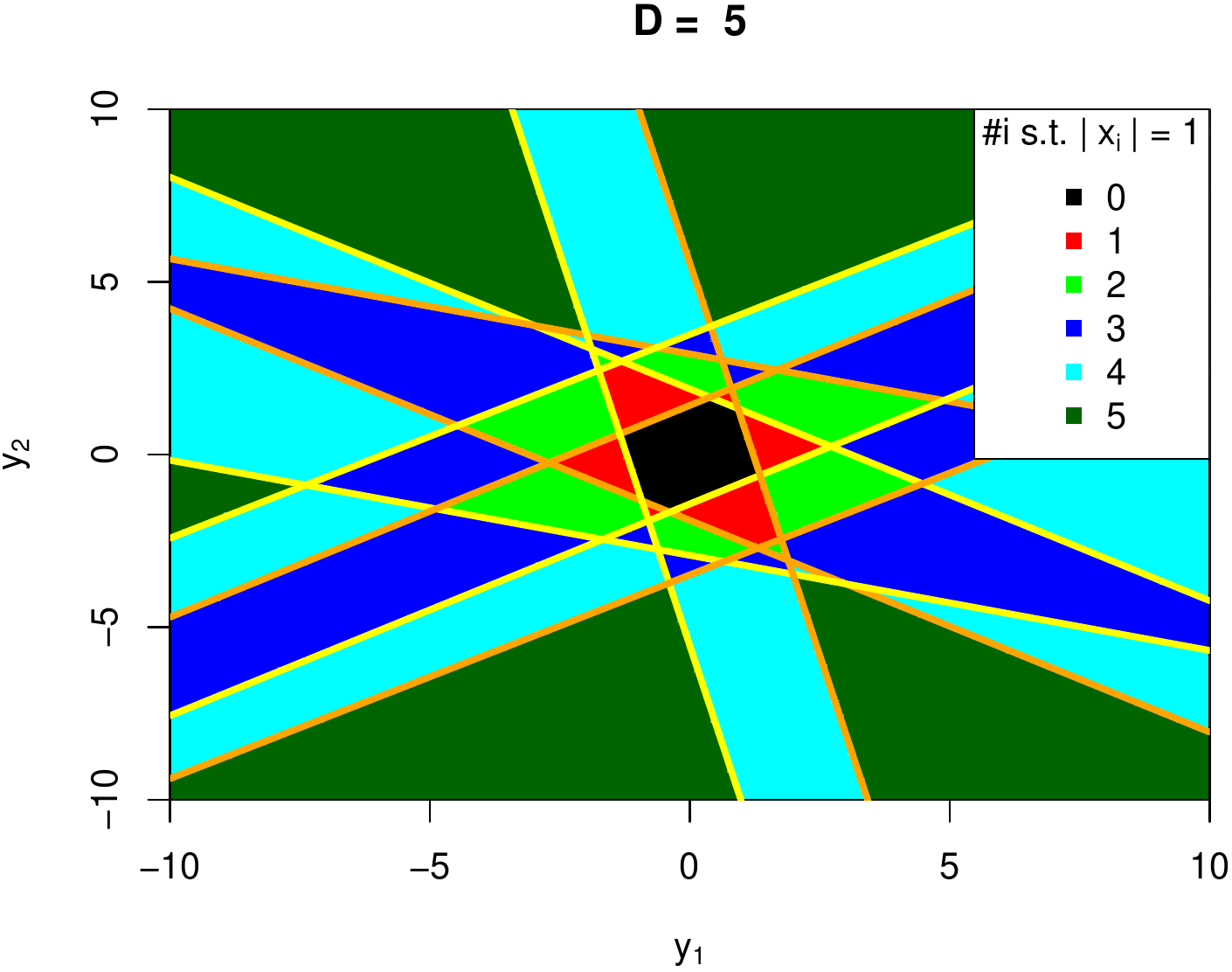}%
\caption{Strips $\Band_i$ in $\R^2$, formed by the orange and yellow lines standing for the parallel hyperplanes.}
\end{subfigure}\\
\begin{subfigure}[c]{0.48\linewidth}%
\centering%
\includegraphics[width=0.8\columnwidth, trim = {0.8cm, 1.1cm, 0.8cm, 1.2cm}, clip]{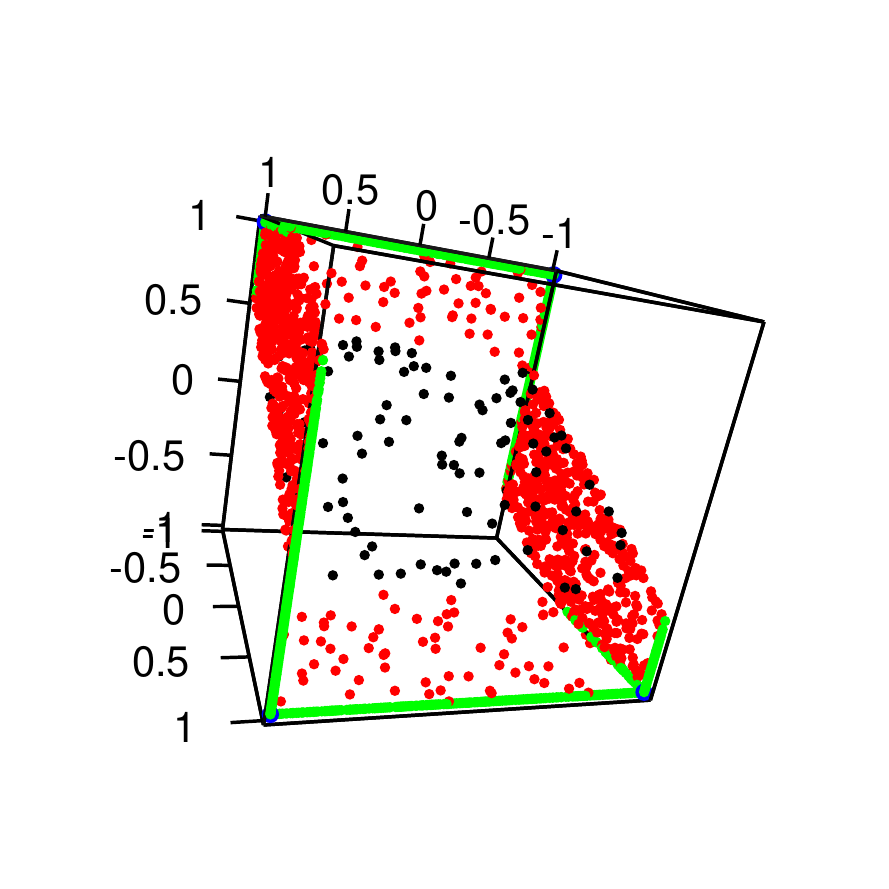}%
\caption{$\Emb$ in $\R^3$.}
\end{subfigure}%
\begin{subfigure}[c]{0.48\linewidth}%
\centering%
\includegraphics[width=\textwidth]{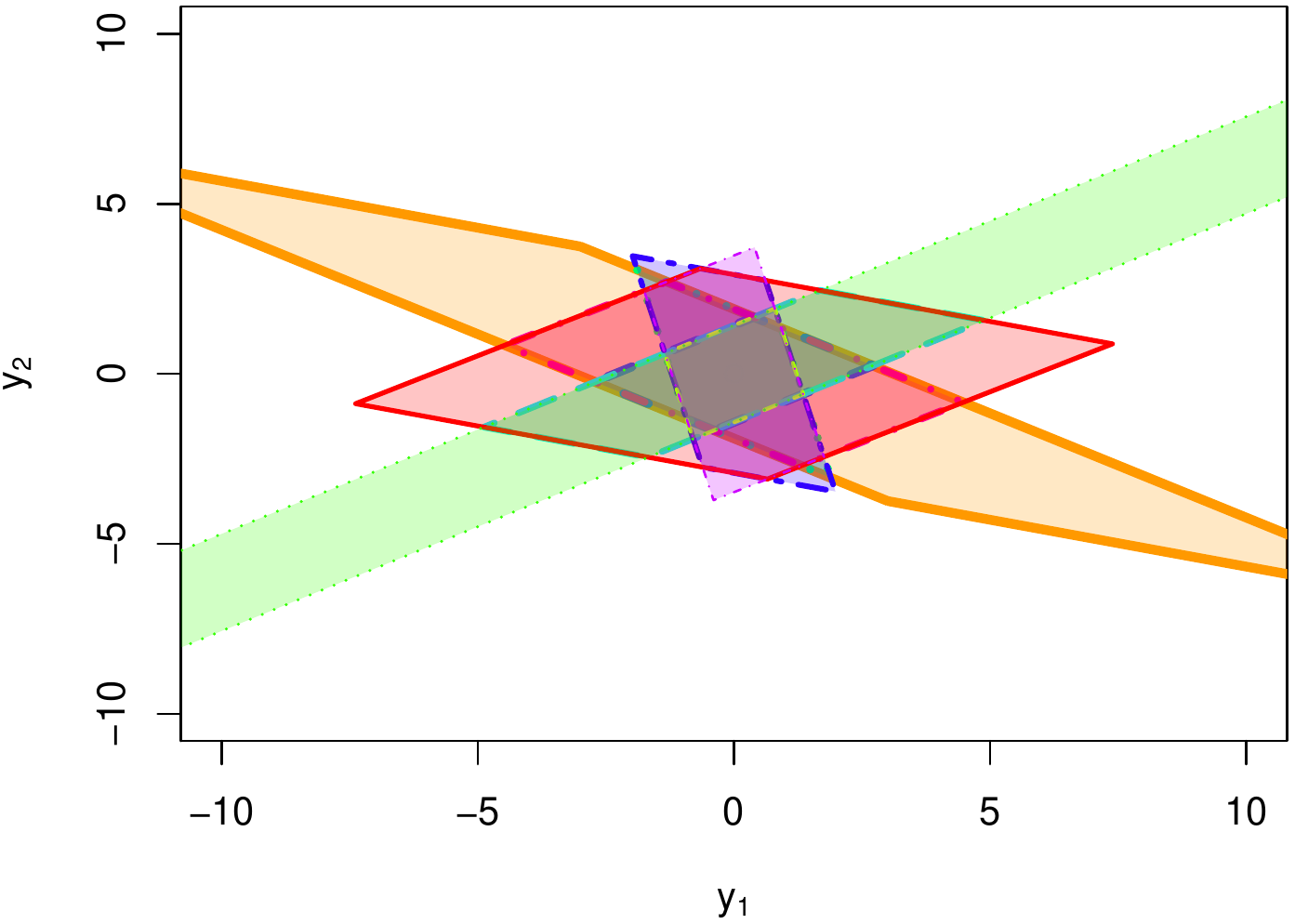}
\caption{Parallelotopes in $\R^2$ forming $\U$.} 
\end{subfigure}\\
\begin{subfigure}[b]{\linewidth}%
\centering%
\includegraphics[width=0.5\textwidth]{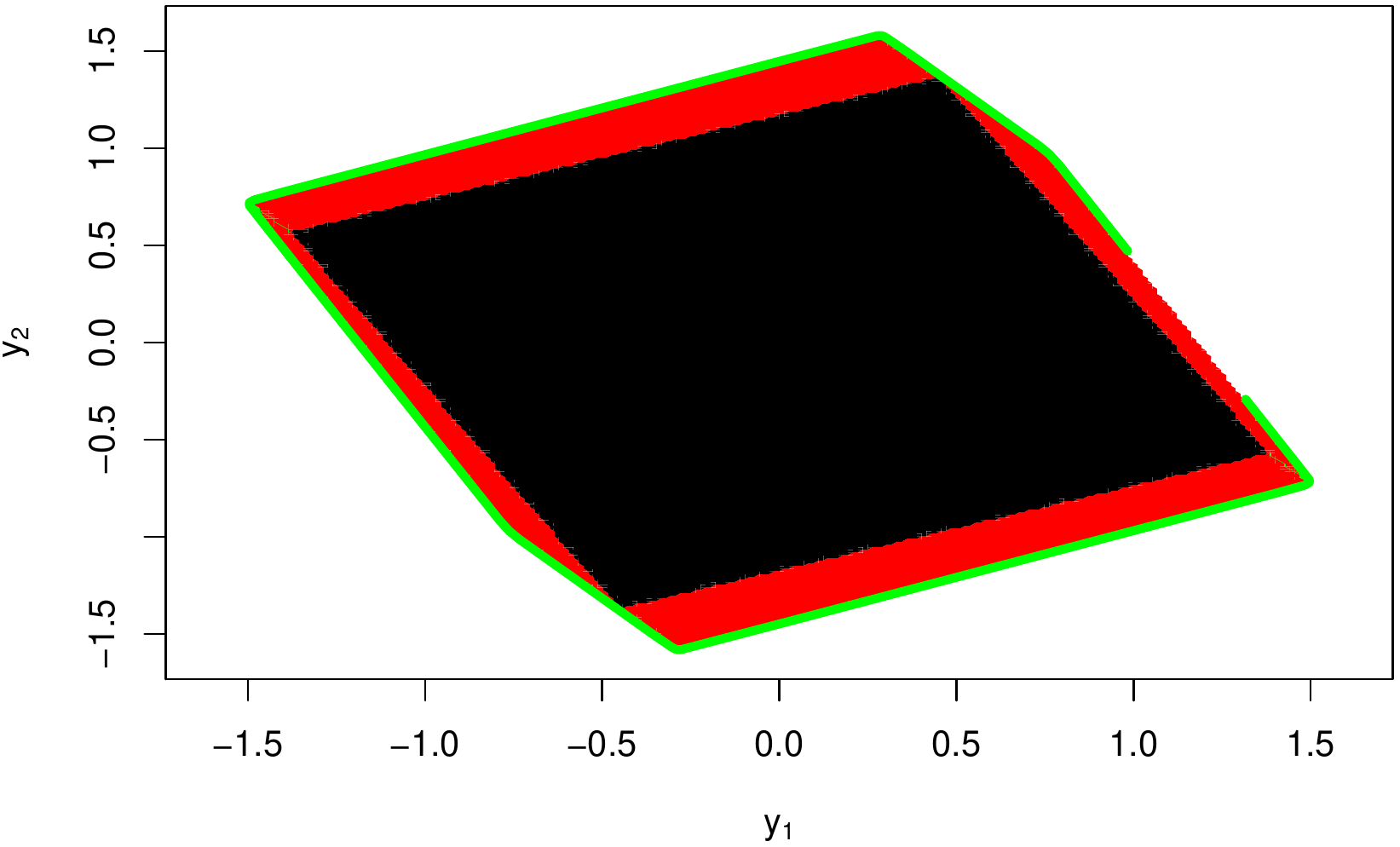}%
\includegraphics[width=0.5\textwidth]{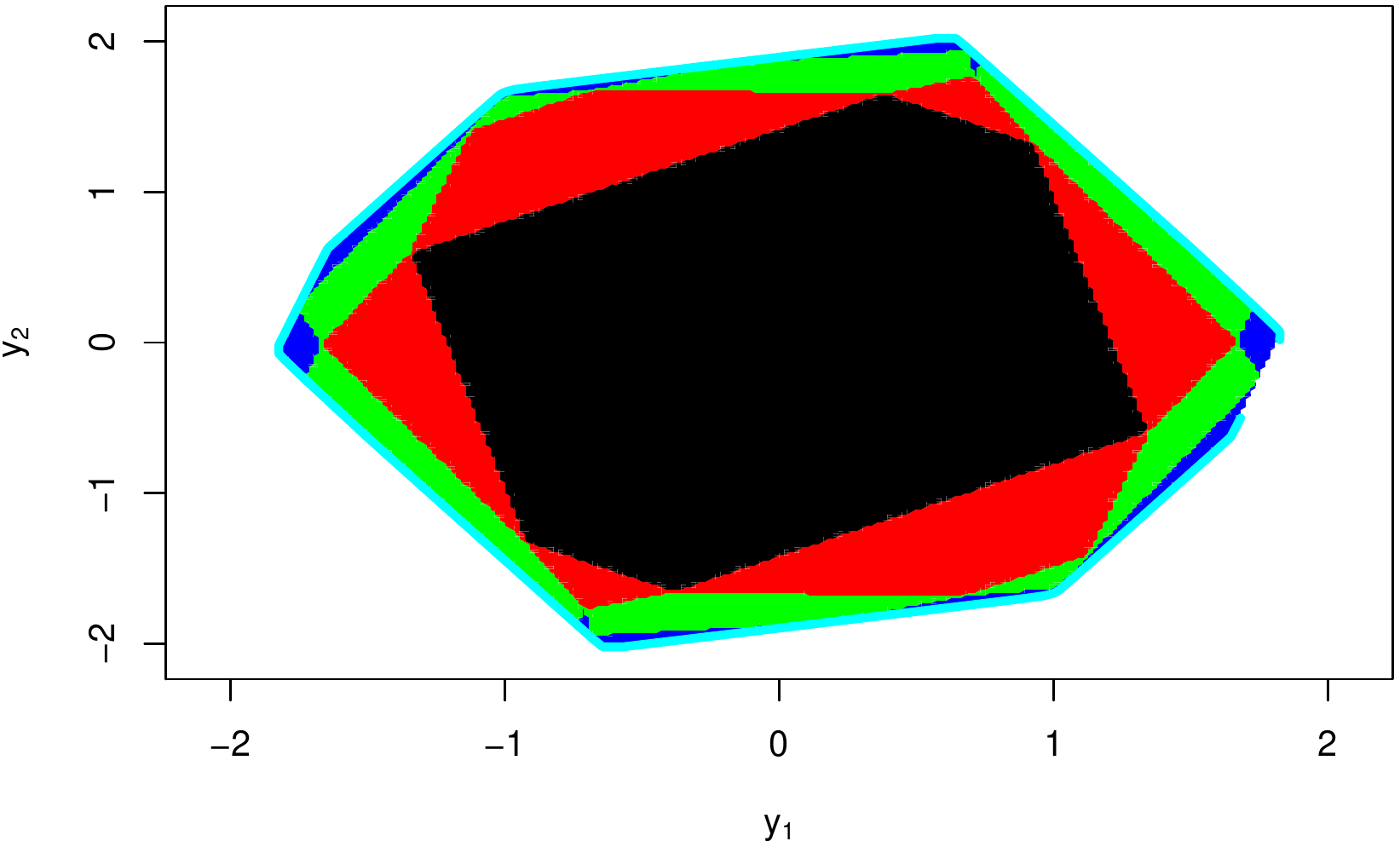} 
\caption{Zonotopes $\Z$ in $\R^2$.} 
\end{subfigure}%
\caption{Representation of the sets of interest introduced in Section \ref{sec:two} with $d =2$ and $D = 3$ (left) or $D = 5$ (right) for fixed $\A$ matrices. Except for panel (c), colors represents how many variables in $\X$ are not in $]-1, 1[$. The intersection of all parallelograms $\I$ is in black.
Panel (a) highlights strips in $\Y$ when using the mapping $\phi$. Panel (b) is the image by $\phi$ of the top left panel, in $\X$.  
Panel (c): each of the 10 parallelotopes (here parallelograms) $\Paral_I$ is depicted with a different color and their union is the minimal set $\U$. For illustration purpose, $\U$ is truncated: the cut green parallelogram is approximately enclosed in $[-210, 210] \times [130,130]$. Panel (d): zonotopes used as pre-images with mapping $\gamma$.
}%
\label{fig:ZH}%
\end{figure}%

Now, the difficulty for the associated mapping is to \emph{invert} the orthogonal projection of $\Emb$ onto $\Ran(\A)$,
more precisely onto an orthonormal basis $\B$ of the latter.
One way to perform this task is to define $\gamma(\y): \Z \to \R^D$ as the map that, 
first linearly embeds $\y \in \Z$ to $\R^D$ with the matrix $\B^\top$ and then maps this $\B^\top \y \in \R^D$ to the closest point $\x \in \X$ whose orthogonal projections onto $\Ran(\A)$, i.e., $\B\x$, coincide with $\B^\top\y$. 
This is represented in Fig. \ref{fig:ex_intro} with arrows, in the illustrative case $d=1$, $D = 2$.\\

Now let us show that the map $\gamma$ is well defined.
Let $p_\A^{-1}(\veca) = \left\{\x \in \R^D \text{ s.t. } p_\A(\x) = \veca \right\}$ the set of pre-images of $\veca \in \R^D$ for the orthogonal projection onto $\Ran(\A)$.
Then, $$\gamma(\y) = p_{\X \cap p_\A^{-1}(\B^\top \y)} (\B^\top \y)$$ 
is the convex projection on the convex set $\X \cap p_\A^{-1}(\B^\top \y)$. Since $\y \in \Z$, $\X \cap p_\A^{-1}(\B^\top \y) \neq \emptyset$ and $\gamma$ is defined. 
This leads to the counterpart of Theorem \ref{prop:ZH} with the mapping $\gamma$.

\begin{theorem}
\label{th:2}
$\Z$ is the smallest closed set $\Y \subseteq \R^d$ such that $\gamma(\Y) = \Emb$. Furthermore, $\Z$ is a compact, convex and centrally symmetric set. 
\end{theorem}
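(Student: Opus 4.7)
The plan is to establish that $\gamma$ restricted to $\Z$ is a bijection onto $\Emb$ with inverse the restriction of $\B$ to $\Emb$; minimality will then be immediate. The geometric properties come for free from Proposition~\ref{prop:eq_zon}: since $\Z = \B\X$ is the linear image of the compact, convex, centrally symmetric hypercube $\X = [-1,1]^D$, it inherits all three properties.

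The core of the proof is to show $\gamma(\Z) = \Emb$. For any $\y \in \Z = \B p_\A(\Emb)$, the set $\X \cap p_\A^{-1}(\B^\top\y)$ is nonempty, closed, and convex, so $\gamma(\y)$ is well defined. To see that $\gamma(\y) \in \Emb$, I would write the first-order optimality condition for this convex projection,
$$\B^\top\y - \gamma(\y) \in N_{\X \cap p_\A^{-1}(\B^\top\y)}(\gamma(\y)) = N_\X(\gamma(\y)) + \Ran(\A),$$
where $N_\X$ denotes the normal cone of $\X$ and the last equality uses a standard constraint qualification (the affine subspace $p_\A^{-1}(\B^\top\y)$ meets the interior of $\X$). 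Decomposing the left-hand side as $n + r$ with $n \in N_\X(\gamma(\y))$ and $r \in \Ran(\A)$, the identity $(\B^\top\y - r) - \gamma(\y) = n$ is precisely the characterization $\gamma(\y) = p_\X(\B^\top\y - r)$; since $\B^\top\y - r \in \Ran(\A)$, this yields $\gamma(\y) \in p_\X(\Ran(\A)) = \Emb$.

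For the reverse inclusion, take $\x \in \Emb$ and write $\x = p_\X(t)$ with $t \in \Ran(\A)$, so that $t - \x \in N_\X(\x)$. Setting $\y = \B\x \in \Z$ gives $\B^\top\y = p_\A(\x)$ and $\x \in \X \cap p_\A^{-1}(\B^\top\y)$. The decomposition $p_\A(\x) - \x = (t - \x) + (p_\A(\x) - t)$, whose first summand lies in $N_\X(\x)$ and whose second lies in $\Ran(\A)$, certifies the optimality condition for $\x$ as the convex projection of $p_\A(\x)$ onto $\X \cap p_\A^{-1}(p_\A(\x))$; by uniqueness of this projection, $\gamma(\B\x) = \x$. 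Thus $\B|_\Emb$ is a right inverse of $\gamma$; it is also a left inverse, because $\x^* = \gamma(\y)$ implies $p_\A(\x^*) = \B^\top\y$ and hence $\B\x^* = \B\B^\top\y = \y$. Consequently, $\gamma|_\Z$ is a bijection onto $\Emb$.

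Minimality then comes at no cost: any closed set $\Y$ with $\gamma(\Y) = \Emb$ lies in the domain $\Z$ of $\gamma$, and the injectivity of $\gamma$ forces $\Y = \Z$, since a proper subset of $\Z$ would map to a proper subset of $\Emb$. The main technical obstacle is the normal cone calculus used in the optimality condition; while routine for the polyhedral intersection $\X \cap p_\A^{-1}(\B^\top\y)$, it does require noting that these two polyhedra satisfy an appropriate qualification condition (e.g., their relative interiors intersect), which is easily checked from the fact that $\B^\top \y \in p_\A(\X)$.
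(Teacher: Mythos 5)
Your argument is correct in substance, but it takes a genuinely different route from the paper. The paper works with elementary distance comparisons: for $\y \in \Z$ it invokes Proposition \ref{prop:eq_zon} to produce a point $\x_2 \in \Emb$ lying in the fiber $p_\A^{-1}(\B^\top \y)$, and a Pythagorean decomposition of squared distances along that fiber shows that $\x_2$ is precisely the minimizer defining $\gamma(\y)$; the same decomposition, run as a contradiction, gives $\gamma(\B\x) = \x$ for $\x \in \Emb$. You instead use first-order optimality conditions and normal-cone calculus: decomposing $\B^\top\y - \gamma(\y)$ as an element of $N_{\X}(\gamma(\y)) + \Ran(\A)$ exhibits $\gamma(\y)$ as $p_\X(\B^\top\y - r)$ with $\B^\top\y - r \in \Ran(\A)$, hence $\gamma(\y) \in p_\X(\Ran(\A)) = \Emb$, and the reverse direction is certified by an explicit normal vector. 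A nice byproduct is that your proof does not actually need Proposition \ref{prop:eq_zon} (nonemptiness of $\X \cap p_\A^{-1}(\B^\top\y)$ already follows from $\Z = \B\X$), and indeed it re-derives $\B\Emb = \Z$ from the bijectivity, whereas the paper's shorter argument leans on that proposition. The minimality and the zonotope properties are handled identically in both.

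One justification needs repair. The constraint qualification you invoke --- that the affine fiber $p_\A^{-1}(\B^\top\y)$ meets the interior of $\X$, or that the relative interiors intersect, ``easily checked from $\B^\top\y \in p_\A(\X)$'' --- is false for $\y$ on the boundary of $\Z$: with $d=1$, $D=2$ and $\B = (1,1)/\sqrt{2}$, at $\y = \sqrt{2}$ the fiber touches $\X$ only at the vertex $(1,1)$. This does not sink the proof: both $\X$ and the fiber are polyhedral, and for polyhedral convex sets the sum rule $N_{C_1 \cap C_2}(\x) = N_{C_1}(\x) + N_{C_2}(\x)$ holds with no qualification whatsoever (Rockafellar, Cor.~23.8.1); moreover your reverse direction only uses the trivial inclusion $N_\X(\x) + \Ran(\A) \subseteq N_{\X \cap p_\A^{-1}(p_\A(\x))}(\x)$, which needs no hypothesis. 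Replace the interiority claim by an appeal to polyhedrality and the argument is complete.
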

\begin{proof}
Please refer to Appendix \ref{ap:proof3}.
\end{proof}

In practice, $\gamma$ can be written as the solution of the following quadratic programming problem:
\begin{align*}
\gamma(\y) = \arg \min \limits_{\x \in \X} \|\x - \B^\top \y\|^2 \\
s.t.~ \B\x = \y.
\end{align*}

Next we discuss the relative merits of both solutions sets $\U$ and $\Z$, with mappings $\phi$ and $\gamma$ respectively. 

\subsection{Discussion}

Recall that there is a compromise between practical implementation, size of the domain (related to convergence speed) and risk of missing a solution.
The original REMBO, with $\phi$ and $\Y = [-\sqrt{d}, \sqrt{d}]^d$ is computationally efficient with its mapping procedure and a fixed definition of the search space.
The balance between the two other points depends on the sampled matrix $\A$, and is expected to favor small domains \cite{Wang2013}.\\

If, as with questions ($\pbQ$) and ($\pbQ'$), emphasis is on ensuring that there is a solution in the low dimensional search space,
and even if both sets $\U$ and $\Z$ are compacts, $\Z$ has several advantages.
First, it is a convex set instead of a star-shaped one, with a generator description instead of a combinatorially demanding union description. 
Enclosing $\U$ in a box or a sphere requires finding the radius of the largest parallelope that enclose it, which is combinatorially difficult.
Of interest here, the smallest box enclosing $\Z$ has a simple expression: the extreme value in the $i^\mathrm{th}$ direction is $\sum\limits_{j = 1}^D |B_{i,j}|$, $1 \leq i \leq d$, see e.g., in \cite{Le2013}.
Hence it is possible to work in $$\boxx = \left[-\sum\limits_{j = 1}^D |B_{1,j}|, \sum\limits_{j = 1}^D |B_{1,j}|\right] \times \dots \times \left[-\sum\limits_{j = 1}^D |B_{d,j}|, \sum\limits_{j = 1}^D |B_{d,j}| \right].$$
Testing whether or not a given point $\y \in \R^d$ is in $\Z$ amounts to verify whether or not the linear system $\B \x = \y$ has a solution in $\X$; 
more conditions such as to identify the boundary of $\Z$ are given e.g., in \cite{Cerny2012}. 
As for $\U$, it amounts to verify that at least $d$ variables of $\A \y$ are in $[-1,1]$.\\

Even if $\gamma$ requires solving a quadratic programming problem, the additional cost usually fades in the case of expensive black-box simulators, with limited budget of evaluations. 
Finally, additional advantages of $\Z$ over $\U$ in practice are provided in Proposition \ref{prop:vols}.

\begin{Proposition}
\label{prop:vols}
Denote $\Vol_d$ the $d$-volume in $\R^D$. Let $\A \in \R^{D \times d}$ with orthonormal columns, i.e., $\A^\top \A = \Id_d$.  
Then $\Vol_d(\A \U) \geq \Vol_d(\Emb) \geq \Vol_d(\A \Z)$.
If, in addition, rows of $\A$ have equal norm, then $\Vol(\Z)/\Vol(\I) \leq d^{d/2}$, which does not depend on $D$.  
\end{Proposition}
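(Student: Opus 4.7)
The plan is to handle the three claims in two steps: the outer volume comparisons $\Vol_d(\A\U) \geq \Vol_d(\Emb)$ and $\Vol_d(\Emb) \geq \Vol_d(\A\Z)$ both reduce to the fact that a $1$-Lipschitz map does not increase $d$-dimensional Hausdorff measure, while the ratio bound is obtained by sandwiching $\I$ and $\Z$ between concentric Euclidean balls. Since $\A^\top\A = \Id_d$, the columns of $\A$ form an orthonormal basis of $\Ran(\A)$, so I take the canonical choice $\B = \A^\top$; then $\A\B = \A\A^\top = p_\A$, which makes all identifications painless.

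For the first inequality, Theorem \ref{prop:ZH} gives $p_\X(\A\U) = \Emb$, and since $\X$ is convex the convex projection $p_\X\colon \R^D \to \X$ is $1$-Lipschitz. Hence $\Vol_d(\Emb) = \Vol_d(p_\X(\A\U)) \leq \Vol_d(\A\U)$. For the second, with $\B = \A^\top$ one has $\A\Z = \A\A^\top\X = p_\A(\X) = p_\A(\Emb)$, the last equality being Proposition \ref{prop:eq_zon}. The orthogonal projection $p_\A$ is also $1$-Lipschitz, so $\Vol_d(\A\Z) = \Vol_d(p_\A(\Emb)) \leq \Vol_d(\Emb)$.

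For the ratio bound, the equal-norm-of-rows condition together with $\sum_{i=1}^D \|\A_i\|^2 = \operatorname{tr}(\A^\top\A) = d$ forces each row of $\A$ to have Euclidean norm $\sqrt{d/D}$. Cauchy--Schwarz then yields $|\A_i \y| \leq \sqrt{d/D}\,\|\y\|$, so any $\y$ with $\|\y\| \leq \sqrt{D/d}$ automatically lies in $\I$; thus $B_d(0,\sqrt{D/d}) \subseteq \I$. On the other side, $\X \subseteq B_D(0,\sqrt{D})$ and the operator norm of $\A^\top$ equals $1$ (its nonzero singular values are all $1$ since $\A^\top\A = \Id_d$), so $\Z = \A^\top\X \subseteq B_d(0,\sqrt{D})$. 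Taking the ratio of the two $d$-dimensional ball volumes, the common factor involving $D^{d/2}$ cancels and leaves exactly $d^{d/2}$, independent of $D$. The only mildly subtle point is the identification $\A\Z = p_\A(\Emb)$, which hinges on choosing $\B = \A^\top$, an option available precisely because $\A^\top\A = \Id_d$; everything else is routine.
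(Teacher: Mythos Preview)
Your proof is correct and follows essentially the same route as the paper's: the volume inequalities come from $p_\X(\A\U)=\Emb$ and $p_\A(\Emb)=\A\Z$ together with the non-expansiveness of the convex and orthogonal projections, and the ratio bound comes from the same ball sandwich $B_d(0,\sqrt{D/d})\subseteq\I$ and $\Z\subseteq B_d(0,\sqrt{D})$ obtained from the common row norm $\sqrt{d/D}$. Your presentation is slightly more explicit (invoking the $1$-Lipschitz property and Cauchy--Schwarz), but there is no substantive difference in approach.
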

\begin{proof}
Please refer to Appendix \ref{ap:proof4}.
\end{proof}

This result provides us with hints on the deformation of the search space occurring with both mappings, sharing the same invariant space $\I$.
With $\phi$, the volume of $\A\U \setminus \A \I$ is relatively bigger than the one of $\Emb \setminus \A \U$. 
The effect is reversed for $\gamma$. 
Since $\I$ is the set where all variables, especially the relevant ones, are not fixed\footnote{Outside of the set corresponding to $\Paral_I$ in $\R^d$, these influential variables $I$ would be fixed to $\pm 1$.},
focusing more on these is, arguably, beneficial.
In fact, the second part of the result indicates that the relative volume of the undeformed part within $\Z$ does not depend on $D$.
In preliminary tests, we tried to extend the domain to $\U$ with the mapping $\phi$, and the performance degraded considerably.
In particular, the volume of $\I$ became quickly negligible compared to the one of $\U$, when increasing $d$ or $D$.
We next discuss these points in more details for GP-based BO, illustrating these differences empirically.

\begin{Remark}
In \cite{Wang2013}, $\A$ is a random matrix with i.i.d.\ standard Gaussian entries. In this case, many results about determinants, eigenvalues and limiting distributions are known, see e.g., \cite{Vershynin2010} or \cite{Nguyen2014} as starting points into this rich literature. 
One result related to Proposition \ref{prop:vols} is that $\| \frac{1}{D} \A^\top \A - \Id_d \| \to 0$ almost surely as $d/D$ goes to 0, see e.g., \cite{Vershynin2010}. 
There are several alternatives to Gaussian random matrices, such as random matrices whose rows are randomly selected on the sphere -- corresponding to random matrices with independent rows of equal norm -- that have the same asymptotic properties \cite{Vershynin2010}.
Their use has been studied in \cite{Binois2015b}, showing benefits mostly for small $d$.
\label{rem:choice}
\end{Remark}

\section{Application to Bayesian optimization}
\label{sec:app}

The random embedding paradigm incorporates seamlessly within GP-based BO methods through the covariance kernel.
After a brief description of Bayesian optimization using Gaussian processes and the specific choices of covariance kernels, 
we present results on a set of test cases.

\subsection{Modified REMBO procedure}

Bayesian optimization, and especially seminal works on the expected improvement initiated in \cite{Mockus1978}, is built on two key concepts: the first one is to consider the underlying black-box function as random and to put a prior distribution that reflects beliefs about it. 
New observations are used to update the prior, resulting in a posterior distribution. 
The second pillar is an acquisition function that selects new locations using the posterior distribution to balance exploitation of promising areas and exploration of less known regions.\\

One such example is the widely used EGO algorithm \cite{Jones1998}. 
Its prior distribution is a Gaussian process, and its acquisition function the Expected Improvement (EI) criterion.
Other popular surrogate models include radial basis functions, random forests and neural networks, see e.g., \cite{Gutmann2001,Hutter2011,Krityakierne2016,Chen2016} and references therein.
As for alternative acquisition functions, we also mention those relying on an information gain as in \cite{Villemonteix2009,Hennig2012}.
The reader interested in these variations on BO may refer to \cite{Shahriari2016} for a recent review.\\ 

GP priors are attractive for their tractability since they depend only on a mean $\mu(\cdot)$ and covariance function $k(\cdot, \cdot)$. 
Assuming that $\mu$ and $k$ are given, conditionally on $n$ observations of $f$, $\f_{1:n} := (f(\x_1), \dots, f(\x_n))$, the predictive distribution is another GP, with mean and covariance functions given by:
\begin{align}
m_n(\x) & = \mu(\x) + \textbf{k}(\x)^\top \textbf{K}^{-1} (\f_{1:n} - \mu(\x_{1:n}))\\
k_n(\x, \x') & = k(\x, \x') - \textbf{k}(\x)^\top \textbf{K}^{-1} \textbf{k}(\x')
\end{align}
where $\x_{1:n} := (\x_1, \dots, \x_n)^\top$, $\textbf{k}(\x) := (k(\x, \x_1), \dots, k(\x, \x_n))^\top$ and $\textbf{K} := (k(\x_i, \x_j))_{1 \leq i,j \leq n}$ are the vector of covariances of $Y(\x)$ with the $Y(\x_i)$'s and the covariance matrix of $Y(\x_{1:n})$, respectively.
The choice of the mean and covariance function dictates the expected behavior of $f$. 
Commonly, $\mu$ is supposed to be constant while $k$ belongs to a parametric family of covariance functions such as the Gaussian and Mat\'ern kernels, corresponding to different hypothesis about the derivability of $f$. Associated hyperparameters are frequently inferred based on maximum likelihood estimates, see e.g., \cite{Rasmussen2006} or \cite{Roustant2012} for specific details.\\

In the case of EI, the improvement is defined as the difference between the current minimum of the observations and the new function value (thresholded to zero). The latter being given by the GP model, EI is the conditional expectation of the improvement at a new observation, which has a closed form expression, see e.g., \cite{Jones1998}. 
Notice that optimizing EI may be a complicated task in itself, see e.g., \cite{Franey2011}, due to multi-modality and plateaus.
Yet evaluating EI is inexpensive and off-the-shelf optimization algorithms can be used (possibly relying on derivatives).\\     

Adapting the framework of Bayesian optimization to incorporate a random embedding amounts to optimize the acquisition function on $\Y$, while evaluations are performed on $\Emb$. 
In terms of GP modeling, when using stationary covariance kernels, what matters is the distance between points. 
Several options are possible to account for high-dimensional distances through compositions of kernels with functions, also known as warpings. 
Existing warpings for $k$ defined on $\Y$ include:
\begin{itemize}
\item identity warping: distances are distances in $\Y$, the corresponding kernel is denoted $k_\Y$ in \cite{Wang2013}; 
\item random embedding and convex projection warping, i.e., using $\phi$, denoted $k_\X$ in \cite{Wang2013}; 
\item an additional composition is proposed by \cite{Binois2015a}, with orthogonal projection onto $\Ran(\A)$ and a distortion. The distortion is used to counteract the effect of the orthogonal projection on high dimensional distances: the further away from $\Ran(\A)$, the closer to the center the projection is. The warping $\Psi$ writes $\Psi(\y) = \left(1 + \frac{\| \phi(\y) - \z' \|}{\| \z' \|} \right) \z'$ with $\z' = \z / \max(1, \max\limits_{1 \leq i \leq D} | z_i |)$, $\z = p_\A(\phi(\y))$.    
\end{itemize}

With the alternative mapping $\gamma$, $k_\Y$ is defined based on distances in $\Z$, while $k_\X$ makes use of $\gamma$ instead of $\phi$. 
As for $k_\Psi$, the orthogonal projection is already performed and it only amounts to applying the correction: 
$\Psi'(\y) = \left(1 + \frac{\| \gamma(\y) - \z' \|}{\| \z' \|} \right) \z'$ with $\z' = \z / \max(1, \max\limits_{1 \leq i \leq D} | z_i |)$, $\z = \B^\top \y)$.
For the sake of readability, consider an isotropic \emph{squared-exponential} (or \emph{Gaussian}) kernel $k(\x, \x') = \exp(-\| \x -\x' \|^2/\theta^2)$ where $\theta$ is the scale or "lengthscale" hyperparameter,
Table \ref{tab:kernels} summarizes the expressions for all combinations of warping and mappings.

\begin{table}[htpb]
\centering
\caption{Gaussian kernel expressions depending on the embedding and warping. 
The first column summarizes existing kernels in the literature \cite{Wang2013,Binois2015a} relying on $\phi$, the second their transposition when using $\gamma$.}
\label{tab:kernels}
\begin{tabular}{c|c|c}
& mapping $\phi$  ($\y, \y' \in \Y$) & mapping $\gamma$ ($\y, \y' \in \Z$) \\ \hline
$\R^d$     &   $k_\Y(\y, \y') = \exp(-\| \y -\y' \|^2/\theta^2)$                  &    $k_\Y(\y, \y') = \exp(-\| \y -\y' \|^2/\theta^2)$              \\ \hline
$\Emb$     &   $k_\X(\y, \y') = \exp(-\| \phi(\y) - \phi(\y') \|^2/\theta^2)$     &    $k_\X(\y, \y') = \exp(-\| \gamma(\y) - \gamma(\y') \|^2/\theta^2)$             \\ \hline
$\Ran(\A)$ &   $k_\Psi(\y, \y') = \exp(-\| \Psi(\y) - \Psi(\y') \|^2/\theta^2)$   &    $k_\Psi'(\y, \y') = \exp(-\| \Psi'(\y) - \Psi'(\y') \|^2/\theta^2)$           
\end{tabular}
\end{table}

To take into account that $\gamma$ is not defined outside of $\Z$, since $\boxx$ is only employed to maximize EI as acquisition function, which is positive, we propose to define $EI_{\text{ext}}: \R^d \to \R$ using a penalization as follows:
$$ EI_{\text{ext}}(\y) = \left\{ 
\begin{array}{l}
EI(\gamma(\y)) \text{~if~} \y \in \Z \\
 -\| \y \| \text{~else}
\end{array}
\right.
$$
where testing if $\y \in \Z$ is performed by checking if the linear system $\B\x = \y$ has a solution $\x \in \X$.
The same test is to be used to build an initial design of experiments in $\Z$.
The penalty $-\| \y \|$ if $\y \notin \Z$ has been chosen to push toward the center of domain, thus toward $\Z$.
An outline of the resulting REMBO procedure with the proposed improvements is given in Algorithm \ref{alg:rembo}. 

\begin{algorithm}[htpb]
\caption{Pseudo code of the REMBO procedure with mapping $\gamma$}
\label{alg:rembo}
\begin{algorithmic}[1]
\Require $d$, $n_0$, kernel $k$ (e.g., among $k_\Y, k_\X, k_\Psi$).
\State Sample $\A \in \mathbb{R}^{D \times d}$ with independent standard Gaussian coefficients.
\State Apply Gram-Schmidt orthonormalization to $\A$.
\State Define $\B = \A^\top$ and compute $\boxx$.
\State Construct an initial design of experiment in $\Z$, of size $n_0$. 
\State Build the GP model with kernel $k$.
\While{time/evaluation budget not exhausted}
\State Find $\y_{n+1} = \argmax_{\y \in \boxx} EI_\text{ext}(\y)$
\State Evaluate the objective function at $\y_{n+1}$, $f_{n+1} = f(\gamma(\y_{n+1}))$.
\State Update the GP model based on new data.
\EndWhile
\end{algorithmic}
\end{algorithm} 

\subsection{Numerical experiments}

We propose to illustrate the interest of the proposed modifications on several
benchmark functions of various dimensionality, as summarized in Table
\ref{tab:funcs}. Some are classical multimodal synthetic functions such as
Branin, Hartman6, Giunta and Levy, which are reasonably well modeled by
Gaussian process. The Borehole function \cite{Morris1993} models the water-flow in a borehole, commonly used in the computer experiments literature. The
last one, the Cola function is a  weighted least squares scaling problem, used
for instance in multidimensional scaling, see, e.g., \cite{Mardia1979}. The
number of influential dimensions of those problems varies from 2 to 17, for a
total number of variables $D$ ranging from 17 to 200. The former are chosen
randomly, but kept fixed for each specific run (25 in total) to ensure
fairness among comparators. The budget for optimization is either 100 or 250
evaluations, which are representative of expensive optimization tasks.\\

\begin{table}[htpb]
\centering
\caption{Summary of test functions}
\begin{tabular}{l|llll||l|lll}
name   & $d$ & $d_e$ & $D$ &  & name  & $d$ & $d_e$ & $D$ \\ \hline
Branin \cite{Dixon1978} & 2   & 2     & 25, 100  &  & Hartman6 \cite{Dixon1978} & 6   & 6     & 50, 200  \\
Giunta \cite{Mishra2006} & 2   & 2    & 80  &  & Borehole \cite{Morris1993} & 8   & 8     & 50  \\
Cola \cite{Mathar1994}  & 6   & 17    & 17  &  & Levy \cite{Laguna2005}    & 10  & 10    & 80 
\end{tabular}
\label{tab:funcs}
\end{table}

The emphasis is on both the average and worst case performances, as with the
new formulation the search space $\Z$ maps with $\gamma$ to the entire $\Emb$.
We compare it to the original choice of REMBO: mapping $\phi$ with search
domain $\Y = [-\sqrt{d}, \sqrt{d}]^d$. Preliminary tests with search domains
encompassing estimated $\U$ showed a degraded performance compared to those
with $\Y$ and are not reported here. We also test the three possible
covariance kernels (see Table \ref{tab:kernels}) with both mappings.
Experiments have been performed relying on the \texttt{DiceKriging} and
\texttt{DiceOptim} packages \cite{Roustant2012}, with an unknown constant
trend and Mat\'ern covariance kernel with $\nu = 5/2$. The corresponding code
is publicly available at \url{https://github.com/mbinois/RRembo}. For solving
the quadratic programming problem within $\gamma$, we use the
\texttt{quadprog} package \cite{Berwin2013}. In all the problems here, the
corresponding extra cost was not more than a dozen of milliseconds per
solve.\\

The baseline performance is given by uniform sampling (RO) in the original
domain $\X$. We also compare to the Ensemble Bayesian Optimization method
\cite{Wang2018} using the \texttt{Python} code made available by the authors.
It relies on an ensemble of additive GP models on randomized partitions of the
space, which has been shown to scale both in terms of dimension and number of
variables. We use the default tuning parameters, with batch size twenty, and let
the number of additive components increase up to $d$.\\

We use the optimality gap as a metric, i.e., best value found minus known
optimum value. The results are provided in Figs. \ref{fig:res_prog} and
\ref{fig:res_fin}, corresponding to final boxplots and progress over
iterations, respectively.  Overall, the median performance of REMBO variants
is better than both uniform sampling (RO) and ensemble Bayesian optimization
(EBO). Moreover, the worst performance in terms of $75\%$ quantile is almost
always improved with the mapping $\gamma$.  Between the three kernel choices,
$k_\Psi$ is consistently a sound choice, while the performance of $k_\X$ is
highly variable. As a result, looking at the best rank over all tests,
$\gamma$ with $k_\Psi$ is the best combination.\\

The results are the most mitigated for the $d = 2$ cases, where the mapping
$\phi$ can outperform the mapping $\gamma$. As $d$ increases, the difference
becomes more striking in favor of $\gamma$, and its $75\%$ quartile is always
below the $25\%$ quartile from RO. In the Levy case, where $d = 10$, the
original REMBO method is even worse than RO.  Independently of the kernels, a
proportion of under-performing outliers with mapping $\phi$ and fixed
$\Y=[-\sqrt{d}, \sqrt{d}]^d$ can be interpreted as cases when the optimal
solution is not contained in the domain; these do not happen with $\gamma$.
For some of the remaining ones, the reason may be related to an unfavorable
arrangements of strips for the GP modeling that could be alleviated with
further work on kernels.\\

Figure \ref{fig:res_prog} also shows differences in initial designs with
respect to the mapping used. There is no clear trend since the best design
strategy depends on the problem at hand and the location of the optima. For
instance, in the case of Borehole, designs using $\phi$ on $\Y = [-\sqrt{d},
\sqrt{d}]^d$ are better starting points than with $\gamma$ on $\Z$, but this
advantage is quickly reduced. On a different aspect, shown on the Branin and
Hartman6 functions, increasing $D$ does not affect the performance of the
REMBO methods more than the choice of the active dimensions. We did not
conduct such a study for EBO. The Cola function illustrates the case when all
$D$ variables are influential. Even if this is not a favorable setup for
REMBO, it still outperforms RO and EBO with limited budgets.\\

Under the limited budgets used here, relying on random embeddings proved much
better than uniform sampling. The only exception -- for the original method
only -- is with Levy, highlighting that the choice of the domain is crucial with respect to
the performance of the method. It also illustrates that when the budget is
low, it may be detrimental to balance observations to learn the structure of
the function, such as with EBO.
On all examples, considering problem ($\pbR'$) thus appears as a sound
alternative to ($\pbR$). Indeed, initial concerns that a larger search space
may impact the average performance do not reflect on the results, even often
showing a superior performance. As for robustness, the worst performances have
been greatly improved in general.

\begin{figure}%
\centering%
\includegraphics[width=0.45\textwidth, trim = 30 43 20 23, clip = TRUE]{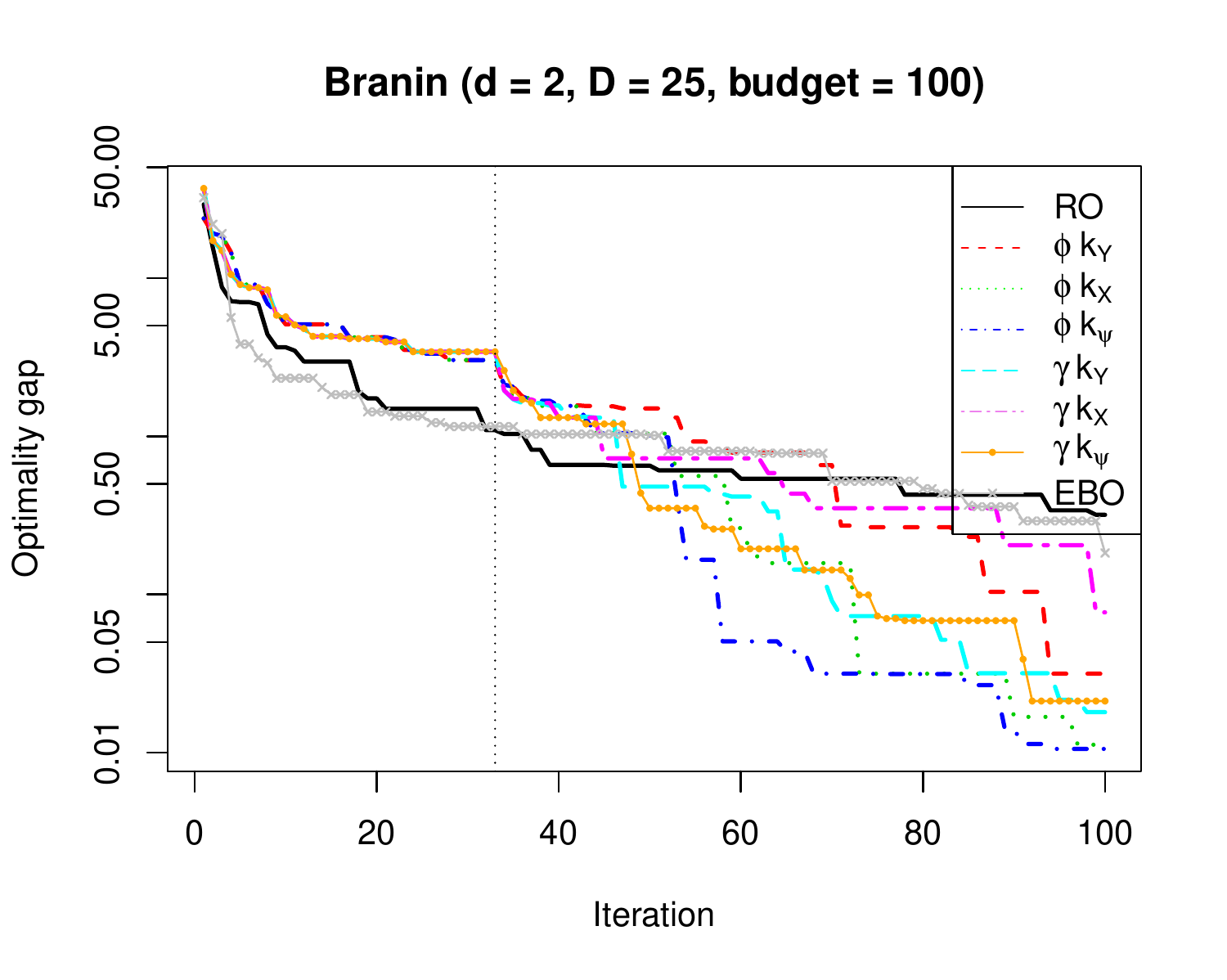}%
\includegraphics[width=0.45\textwidth, trim = 30 43 20 23, clip = TRUE]{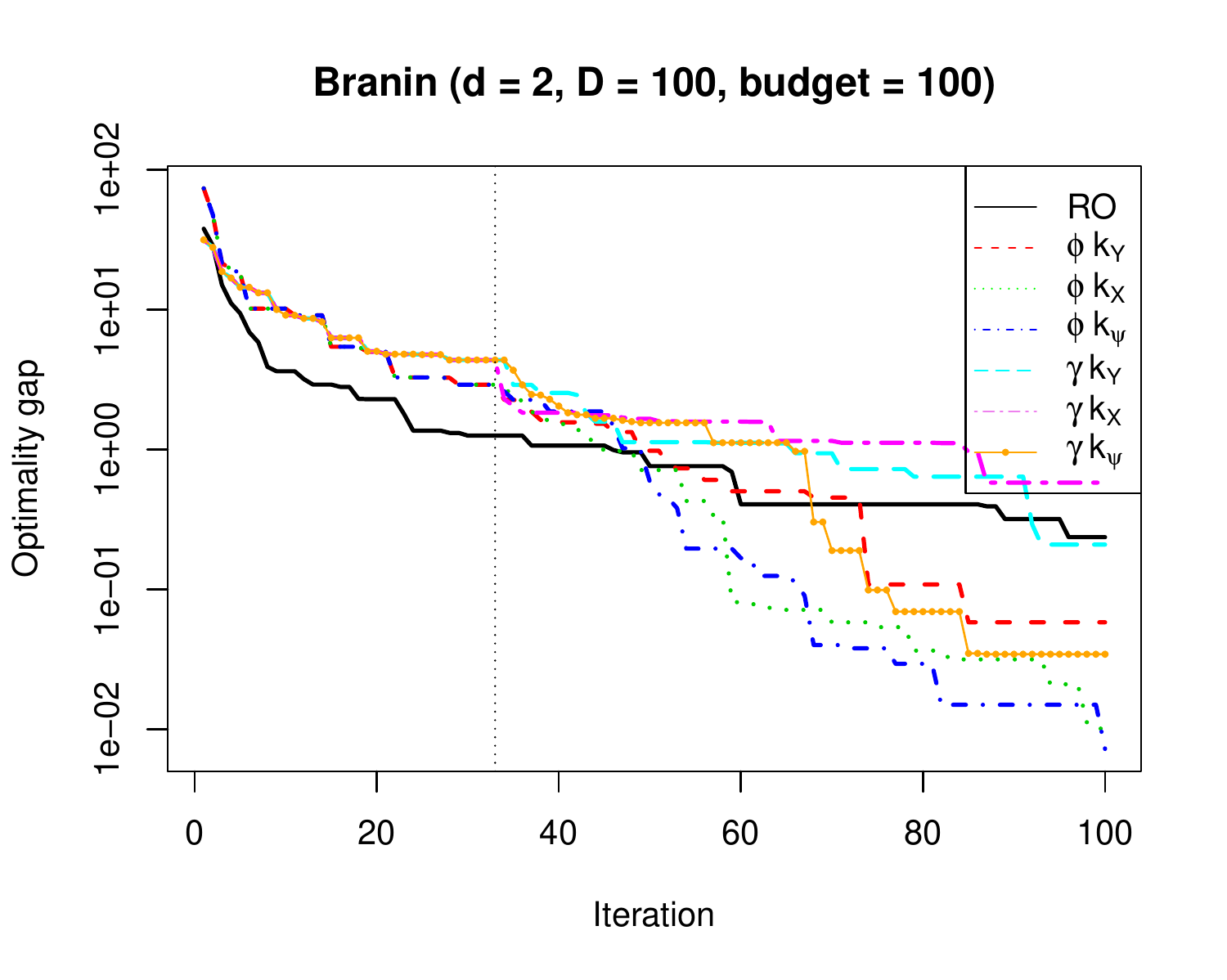}\\
\includegraphics[width=0.45\textwidth, trim = 30 43 20 23, clip = TRUE]{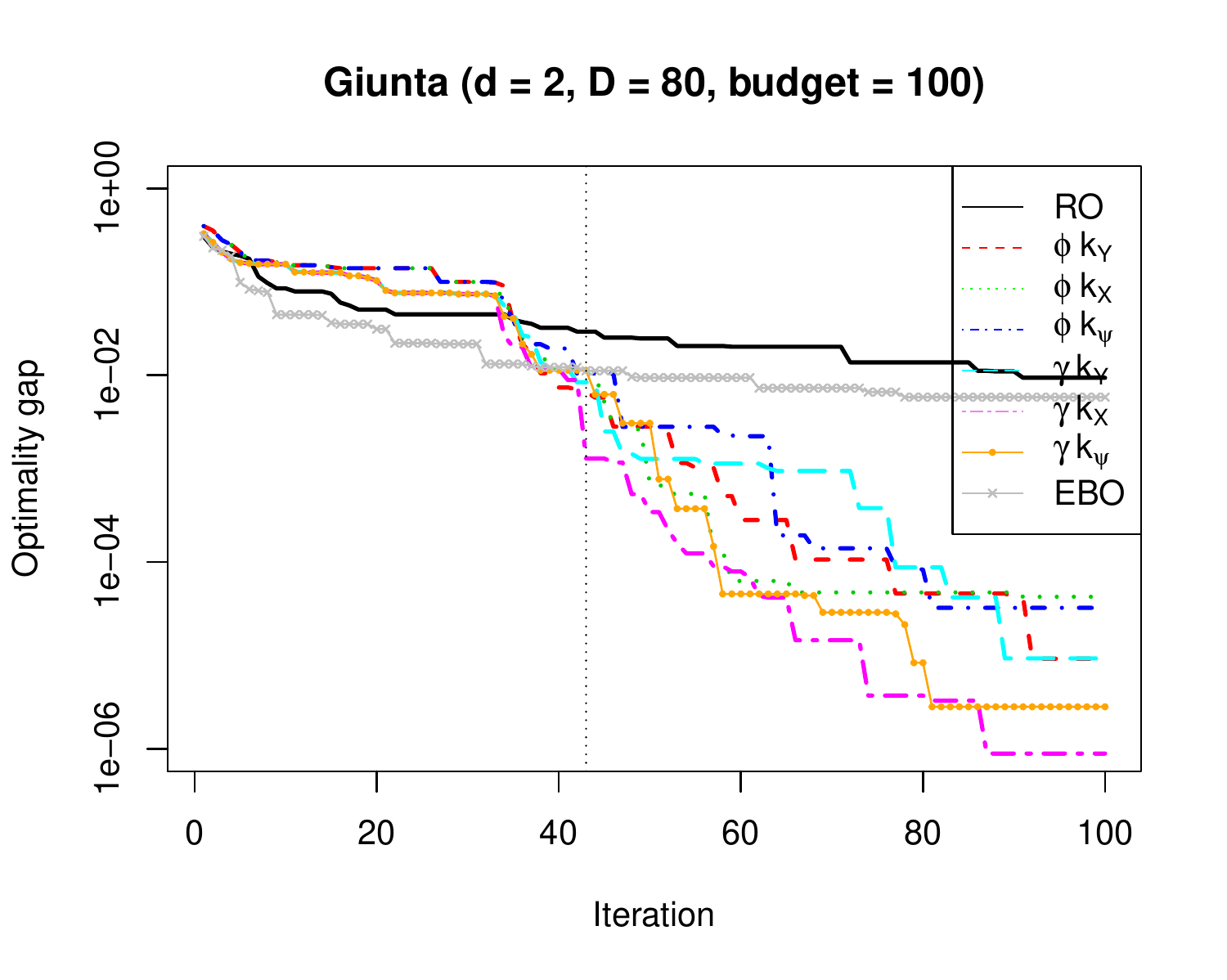}%
\includegraphics[width=0.45\textwidth, trim = 30 43 20 23, clip = TRUE]{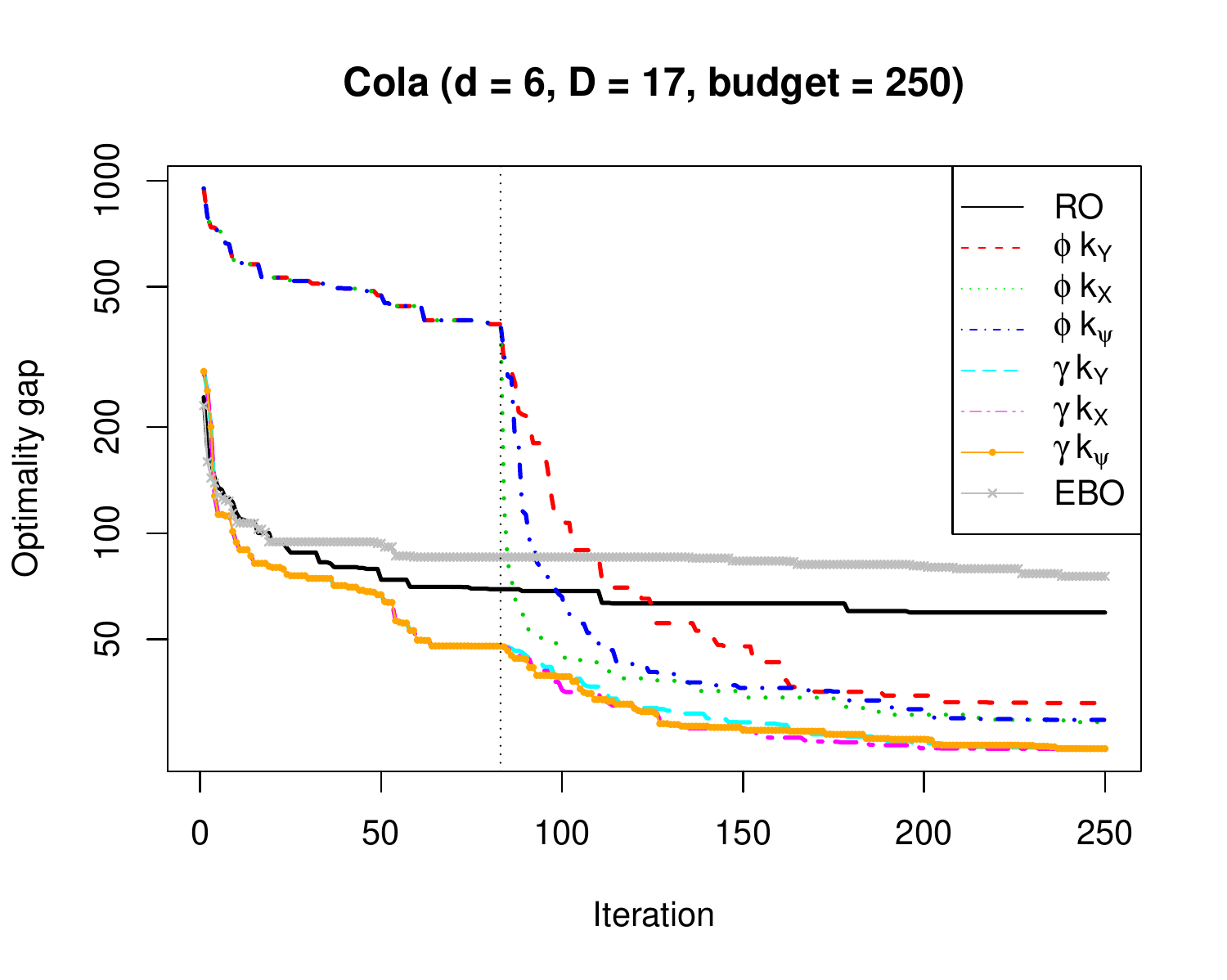}\\
\includegraphics[width=0.45\textwidth, trim = 30 43 20 23, clip = TRUE]{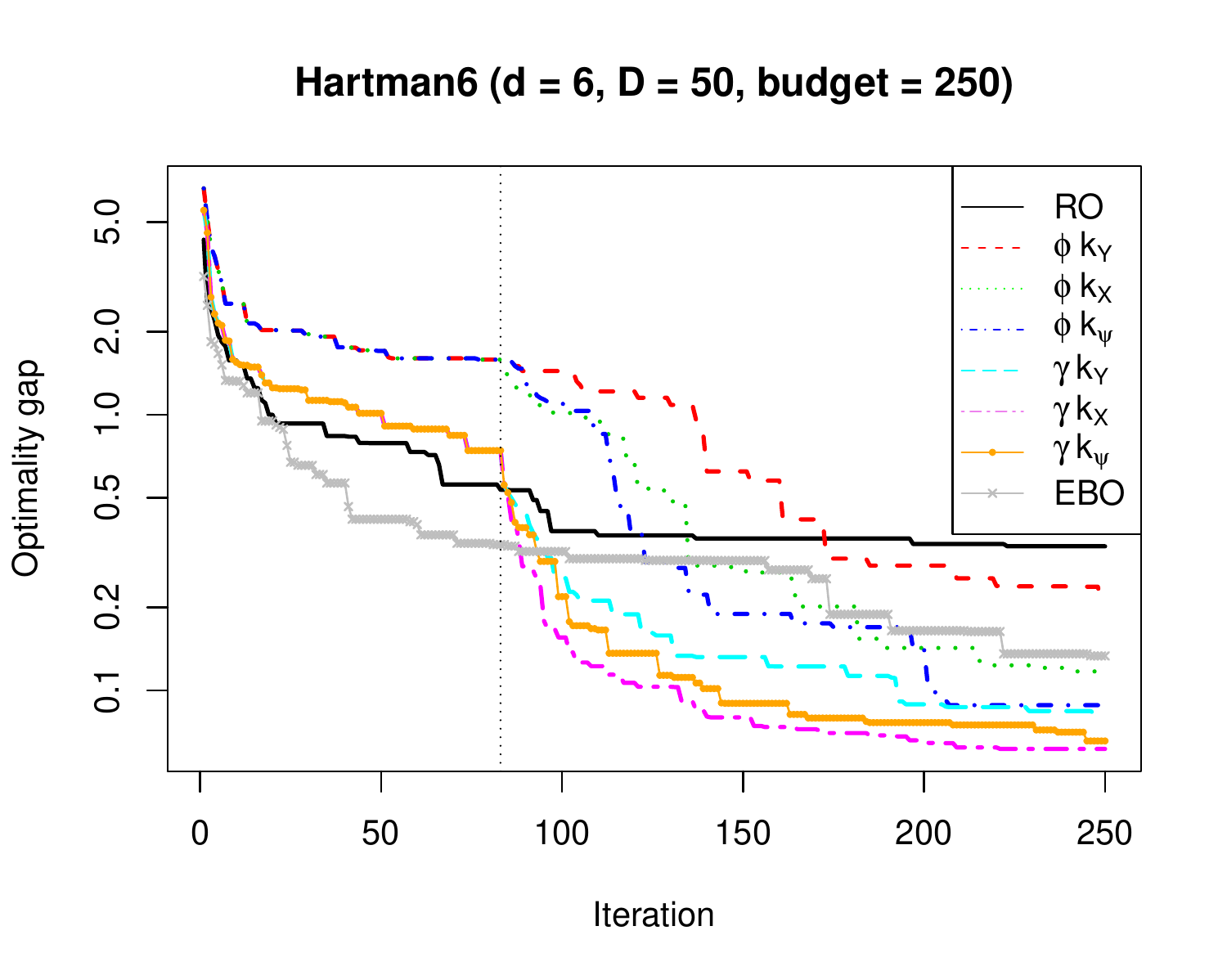}%
\includegraphics[width=0.45\textwidth, trim = 30 43 20 23, clip = TRUE]{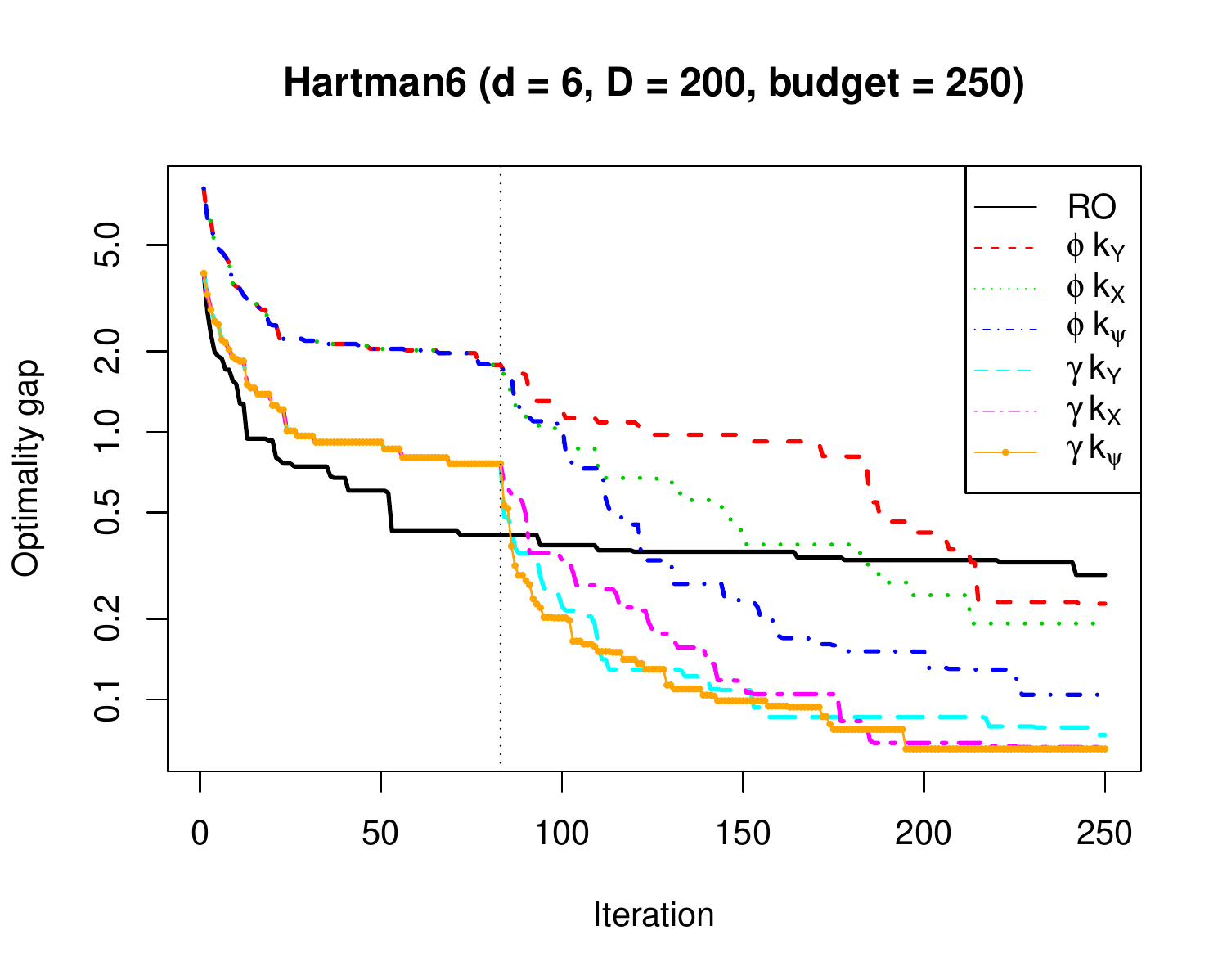}\\
\includegraphics[width=0.45\textwidth, trim = 30 46 20 23, clip = TRUE]{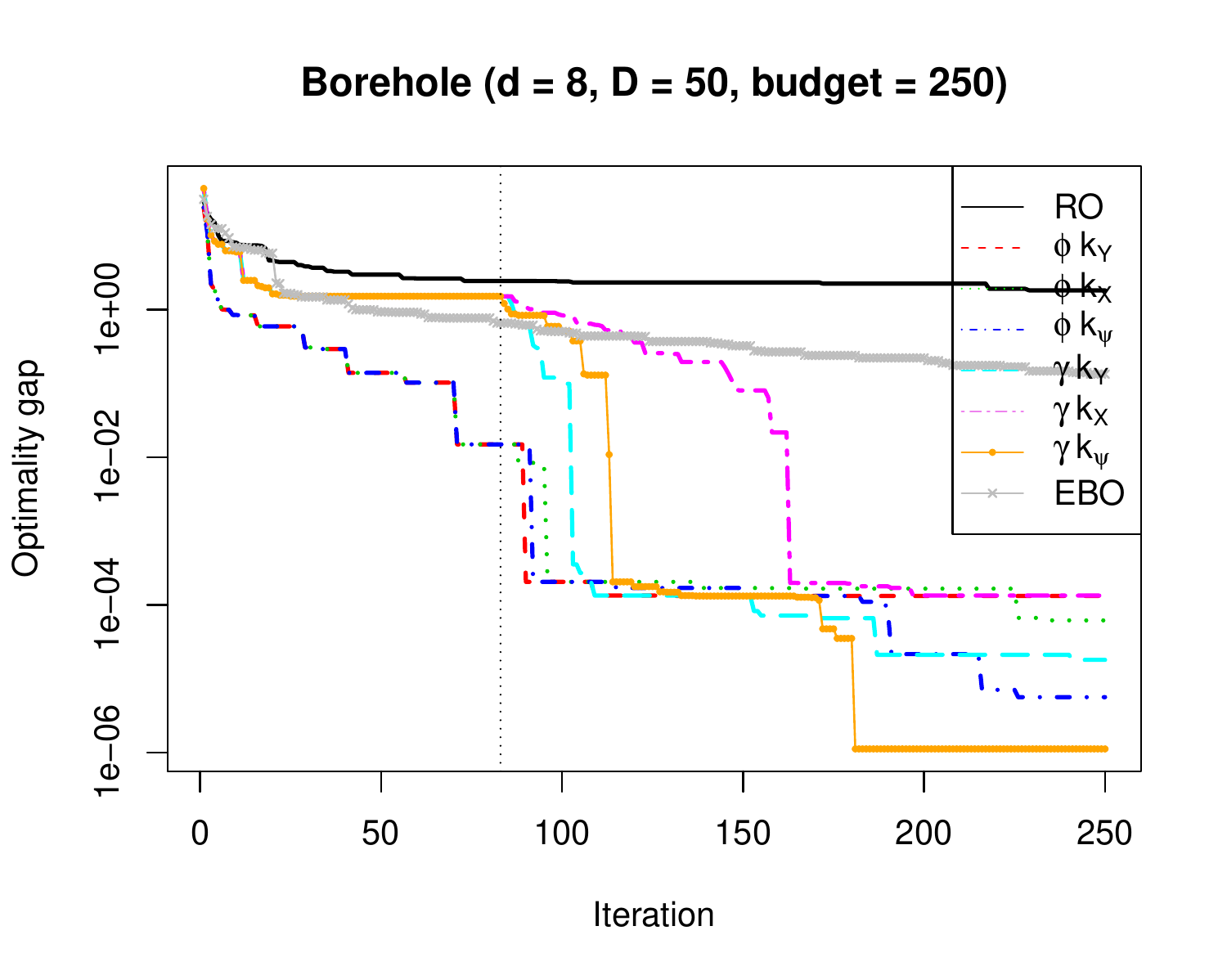}%
\includegraphics[width=0.45\textwidth, trim = 30 46 20 23, clip = TRUE]{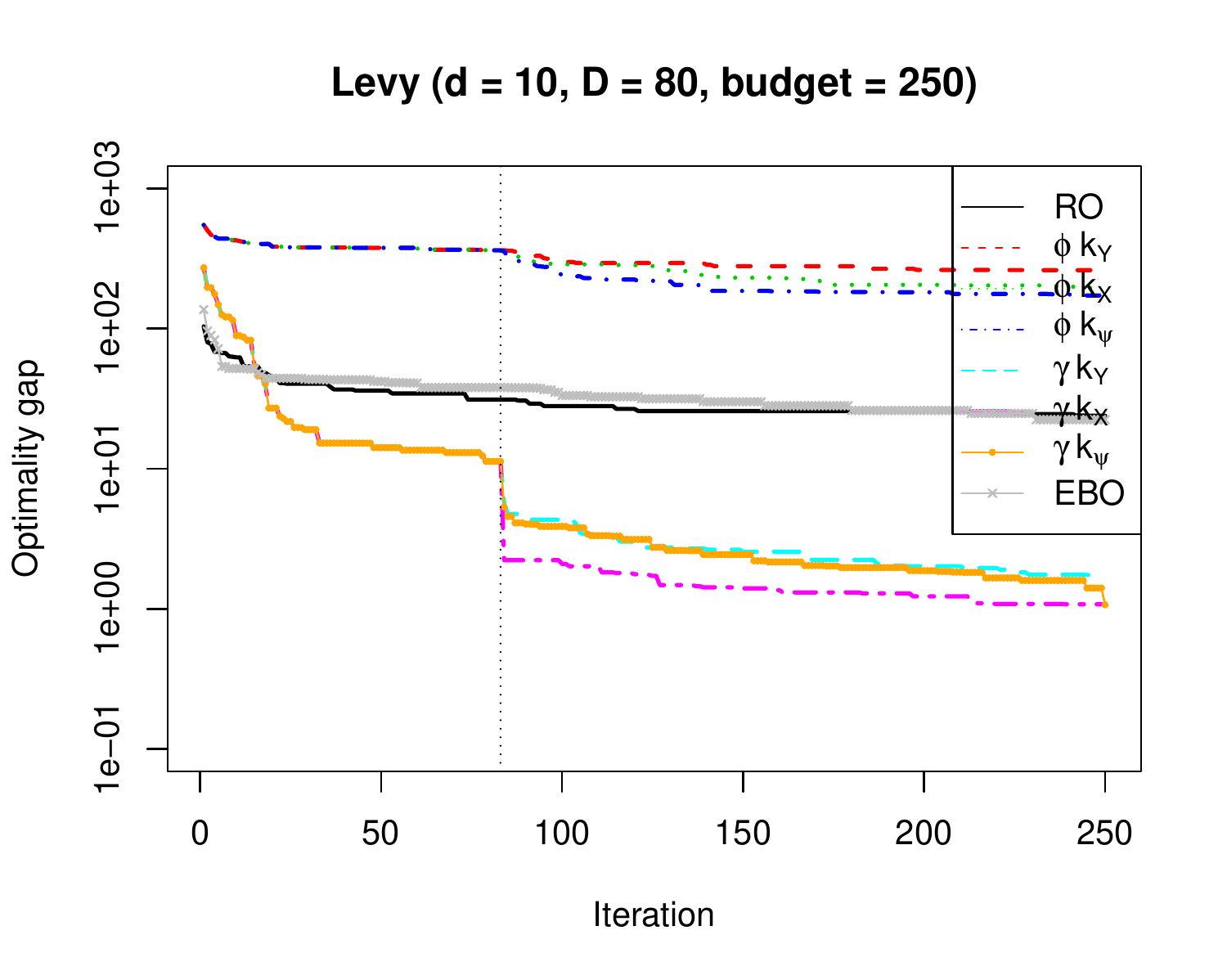}%

\caption{Decrease of median optimality gap (log scale) over iterations for random optimization (RO), ensemble Bayesian optimization (EBO) as well as variants of REMBO with mappings $\phi$ (on $[-\sqrt{d},
\sqrt{d}]^d$) or $\gamma$  (on $\Z$) and kernels in Table \ref{tab:kernels}, on the test problems of Table \ref{tab:funcs}. The dotted vertical lines marks the end of the design of experiments phase.}
\label{fig:res_prog}
\end{figure}
\begin{figure}%
\centering%
\includegraphics[width=0.45\textwidth, trim = 30 40 20 23, clip = TRUE]{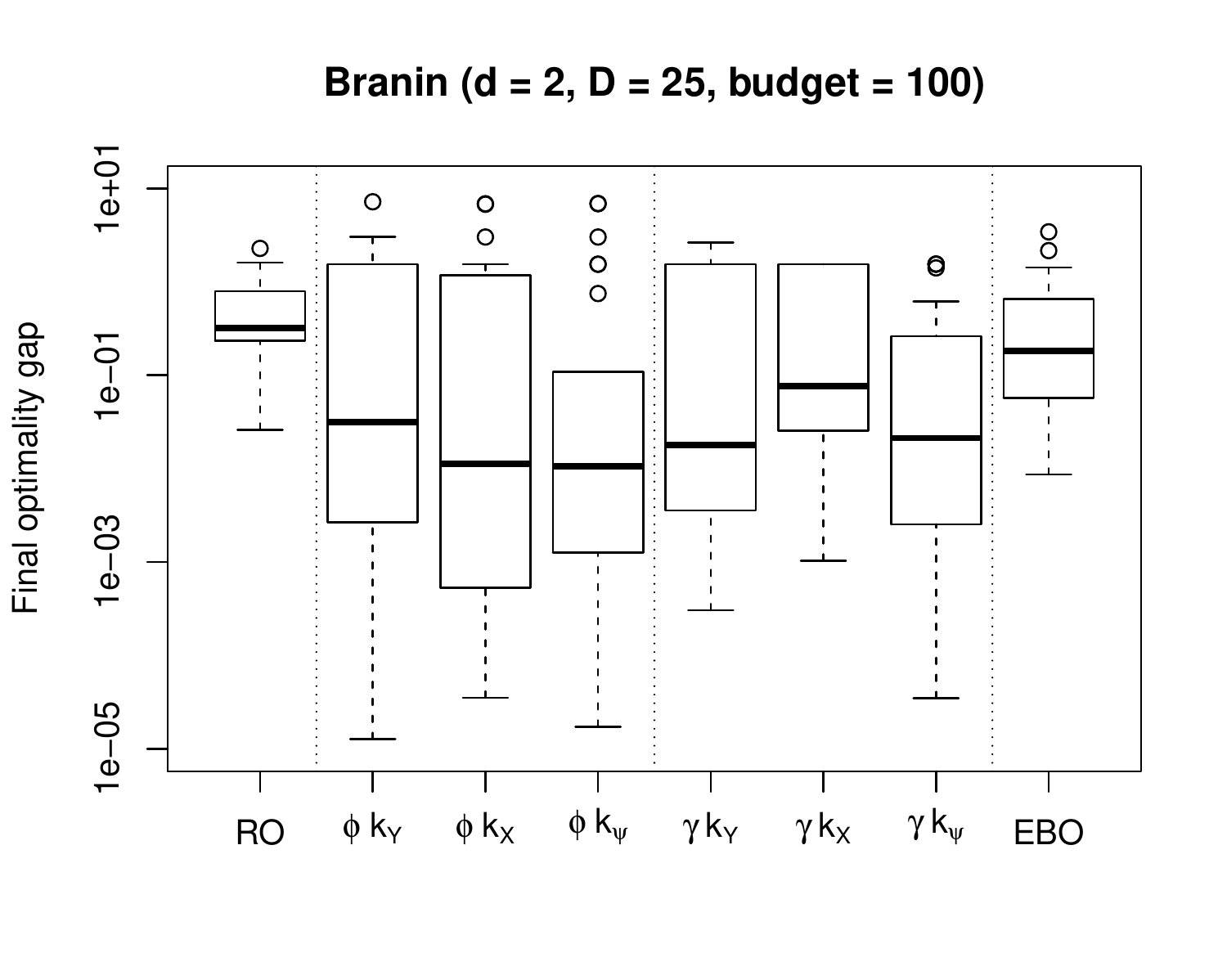}%
\includegraphics[width=0.45\textwidth, trim = 30 40 20 23, clip = TRUE]{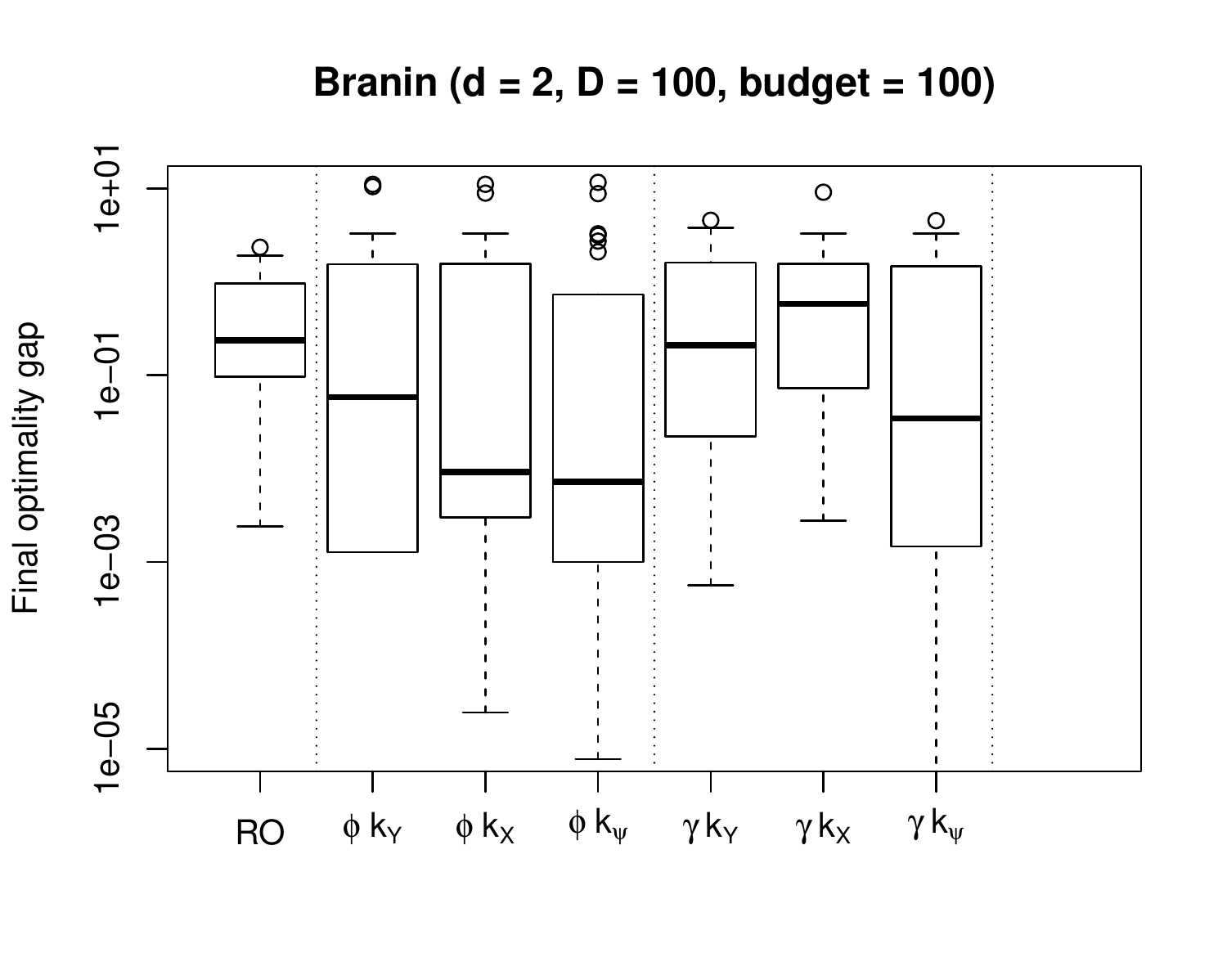}\\
\includegraphics[width=0.45\textwidth, trim = 30 40 20 23, clip = TRUE]{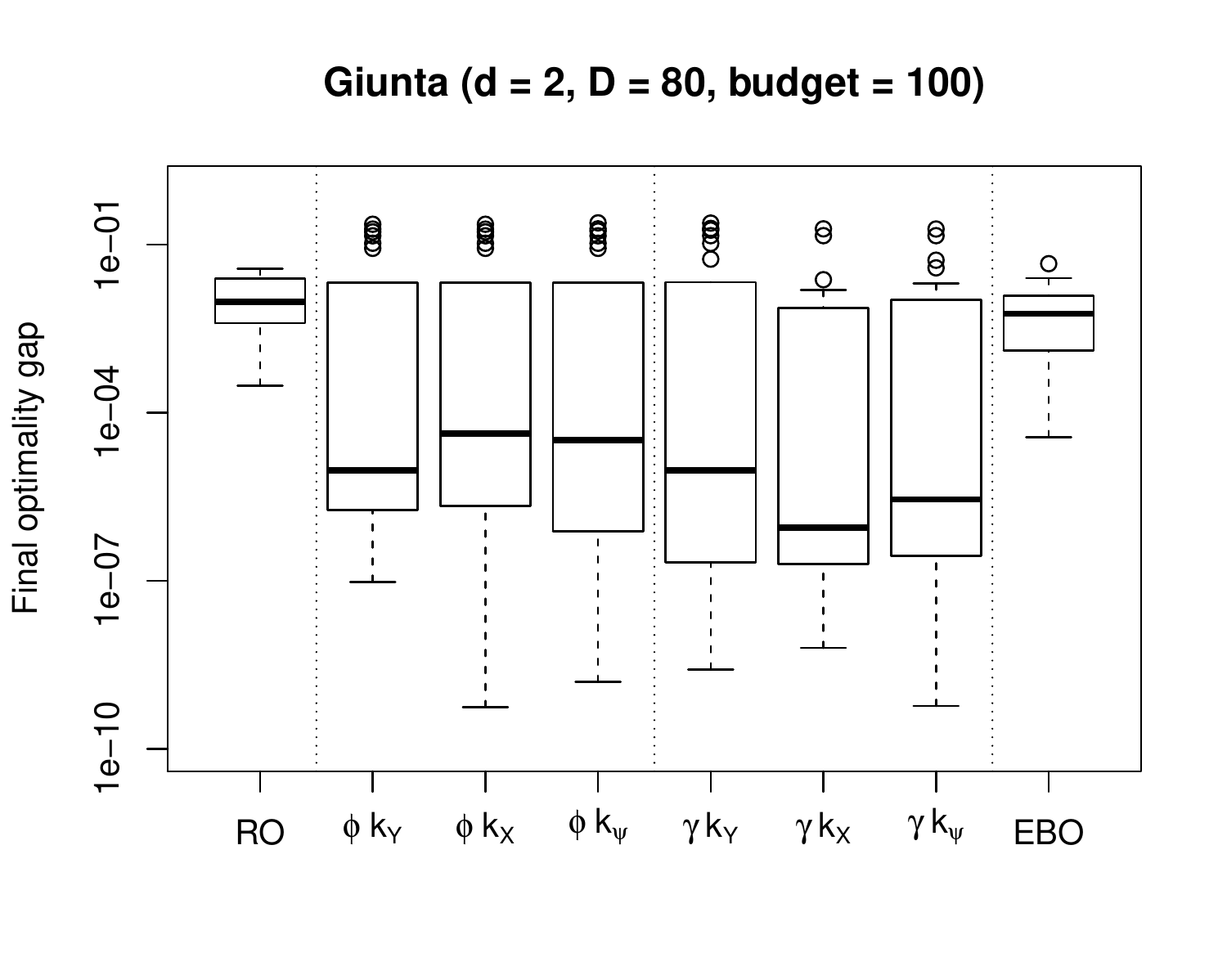}%
\includegraphics[width=0.45\textwidth, trim = 30 40 20 23, clip = TRUE]{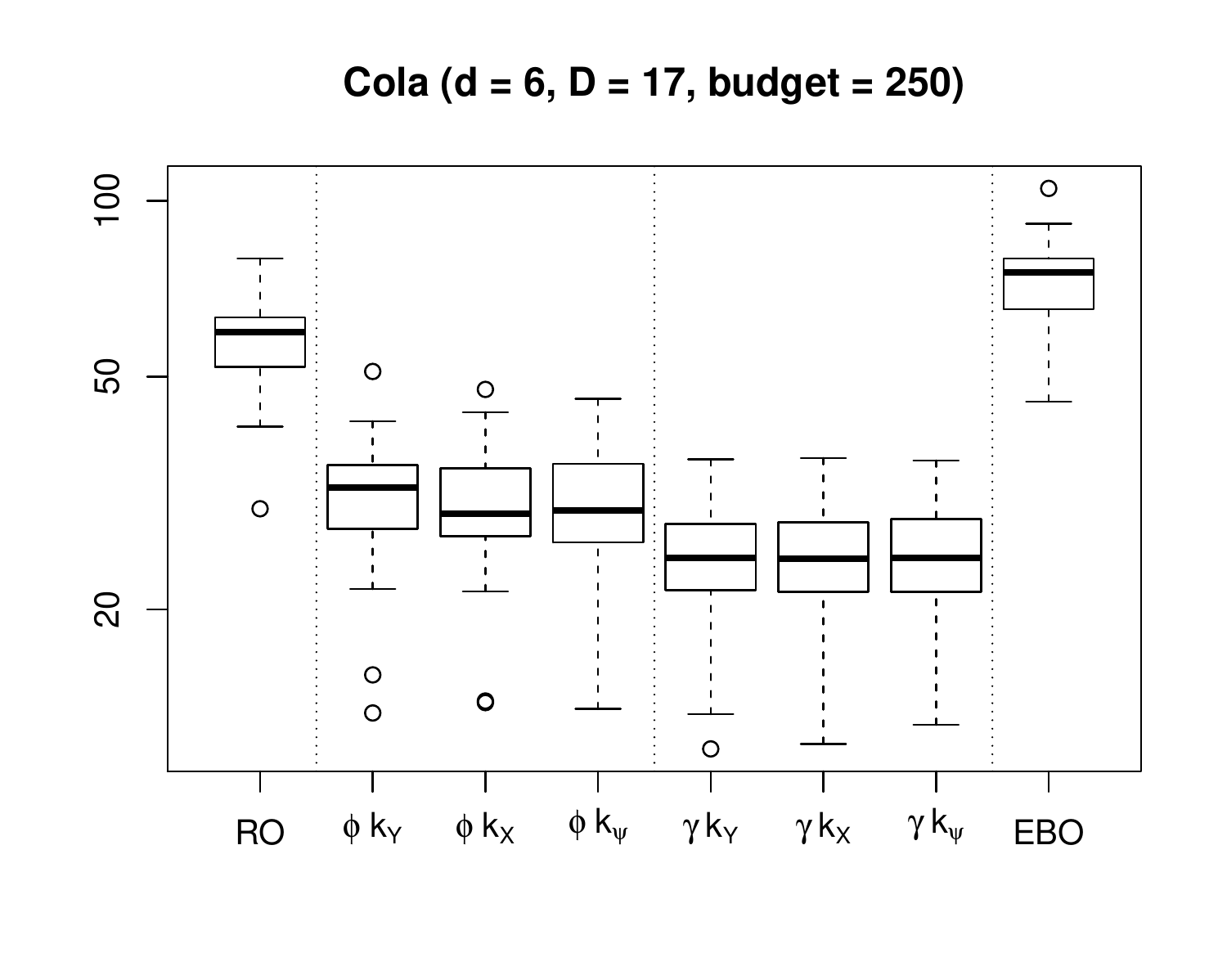}\\
\includegraphics[width=0.45\textwidth, trim = 30 40 20 23, clip = TRUE]{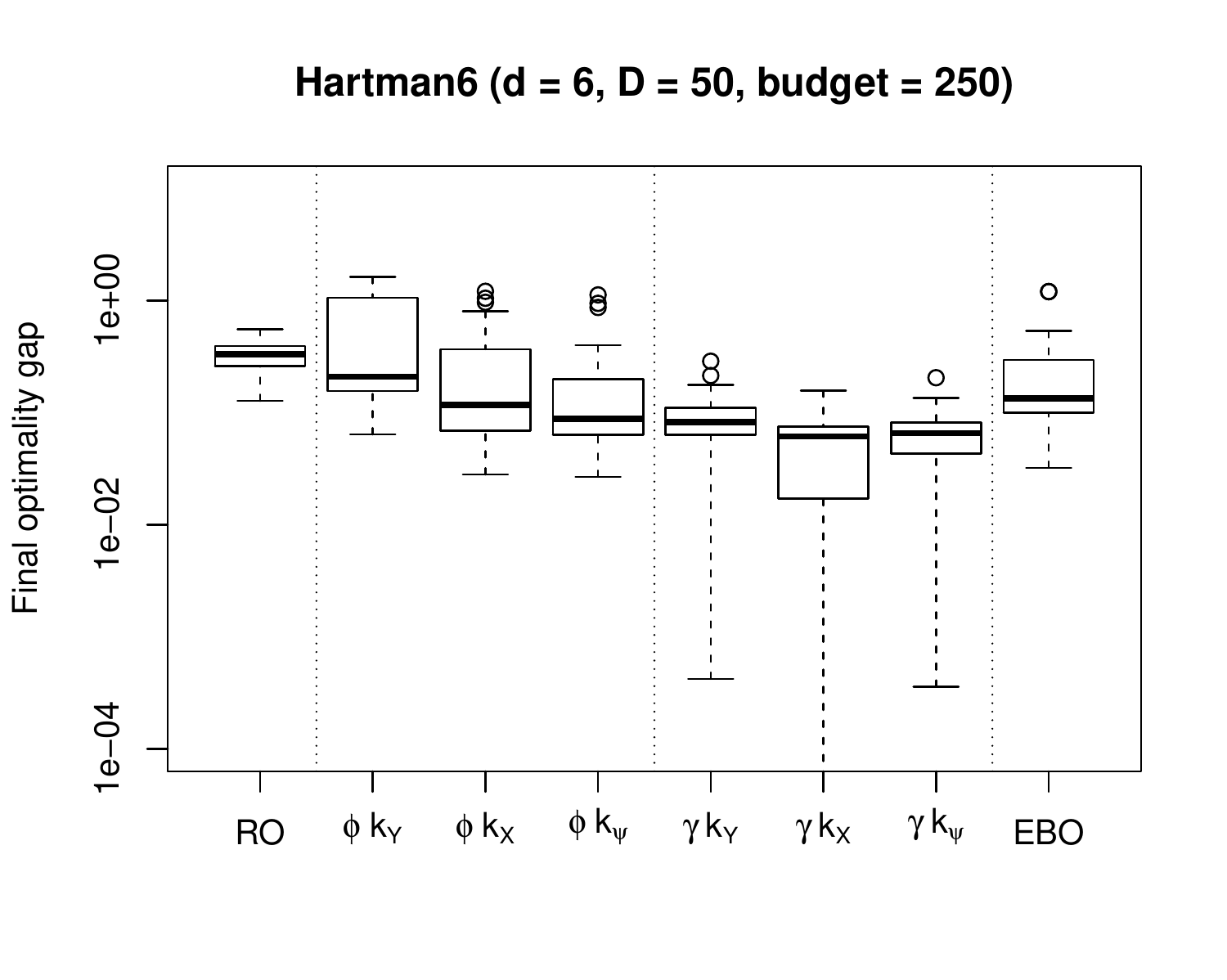}%
\includegraphics[width=0.45\textwidth, trim = 30 40 20 23, clip = TRUE]{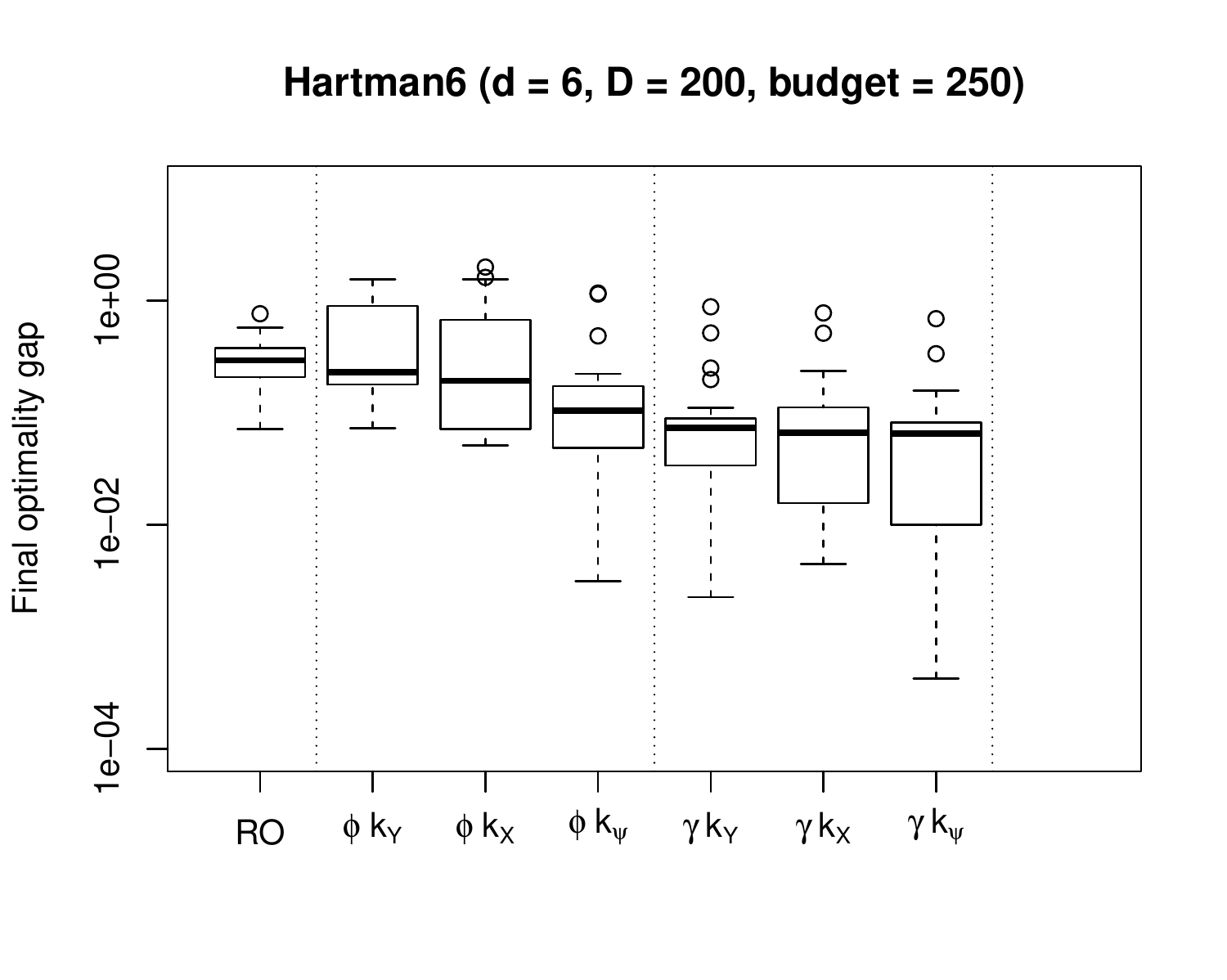}\\
\includegraphics[width=0.45\textwidth, trim = 30 40 20 23, clip = TRUE]{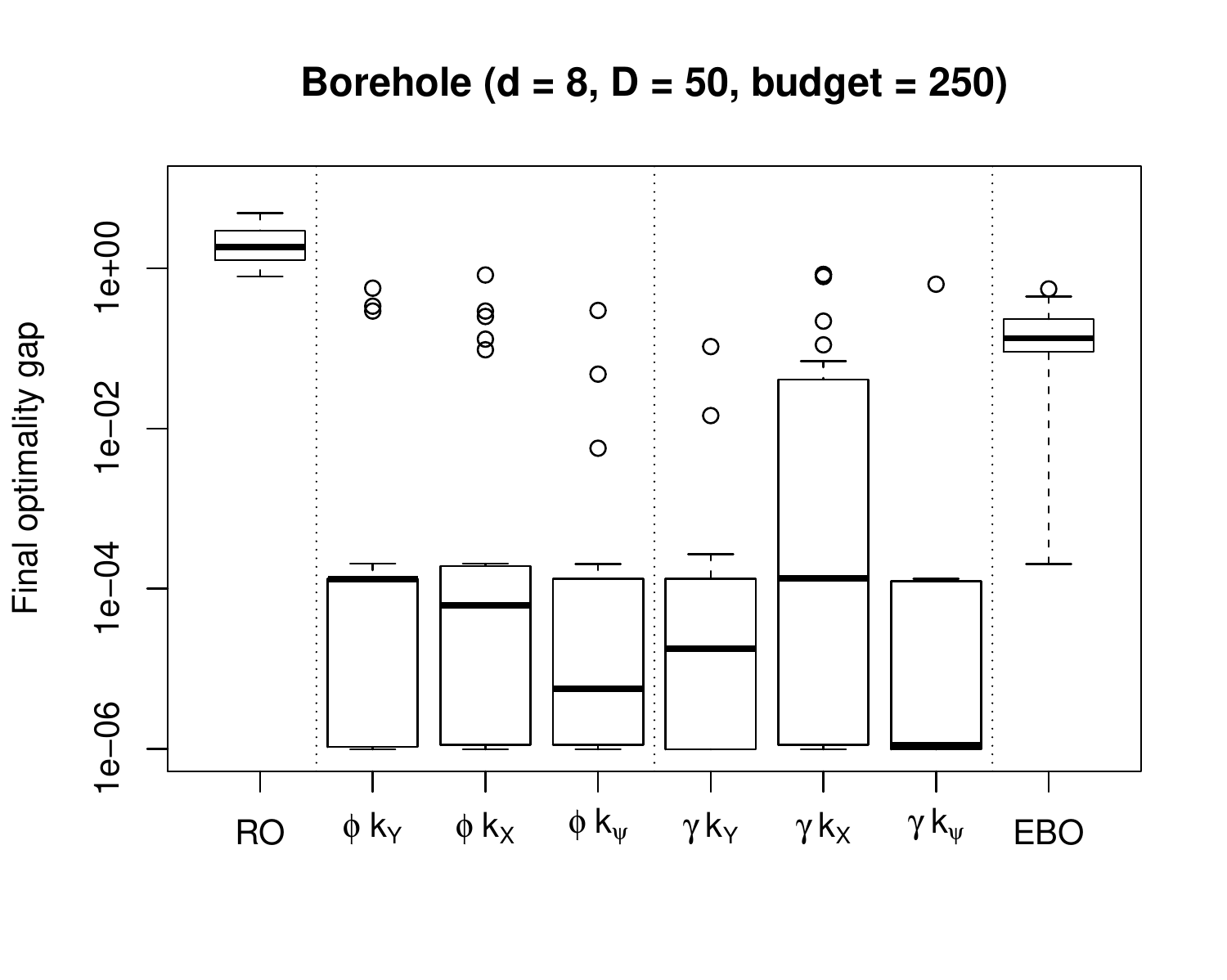}%
\includegraphics[width=0.45\textwidth, trim = 30 40 20 23, clip = TRUE]{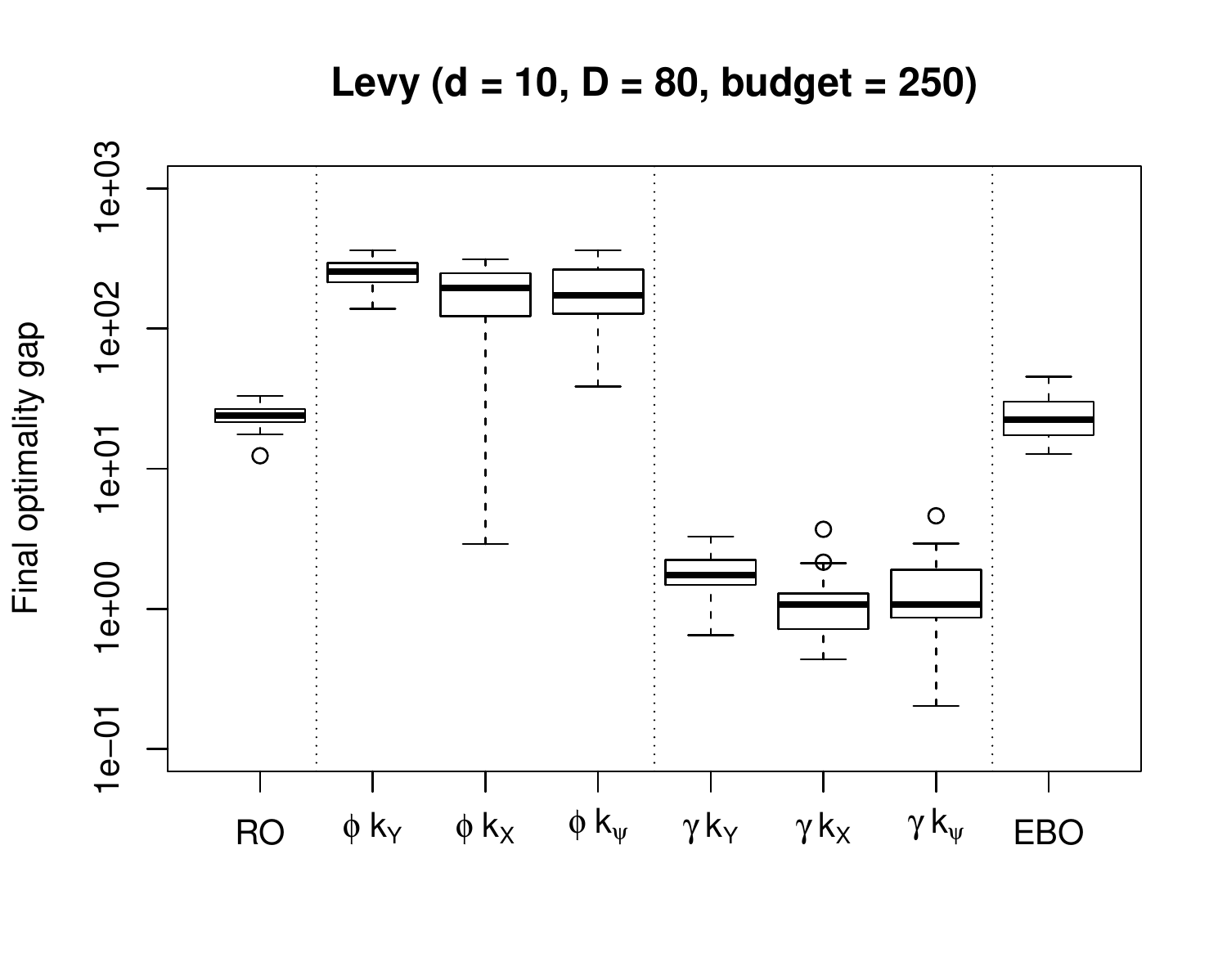}%

\caption{Boxplots of optimality gap (log scale), corresponding to the last iteration in Figure \ref{fig:res_prog}, grouped by mappings.}
\label{fig:res_fin}
\end{figure}

\section{Conclusion and perspectives}
\label{sec:ccl}

Although random embeddings offers a simple yet powerful framework to perform high-dimensional optimization, 
a snag relies in the definition of bounds for the low-dimensional search space. 
In the original setting, this results in an unsatisfactory compromise between hindering efficiency or taking the risk of discarding global solutions.
While for this latter guarantees were given with probability one only for the entire low-dimensional search space $\R^d$, 
we show that it is sufficient to take specific compact sets. 
Our main outcome is to explicitly describe these minimal sets for searching a solution under the random embedding paradigm.\\ 

By pointing out to the difficulties that originate from the convex projection,
we propose to alleviate these drawback by amending this component.
In particular, we show that using an alternative embedding procedure yields a more convenient minimal set to work with, 
that is, relying on a back-projection from the orthogonal projection of the high-dimensional search space. 
We further show on examples that, in this case, the gain in robustness of discarding the risk of missing the optimum on the embedded low-dimensional space outweighs the increase in size of the search space.\\ 

The benefits could be even greater when extending the random embedding technique to constrained or multi-objective optimization, as tested e.g., in \cite{Binois2015b}. 
Indeed, the impact of restricting the search space too much could be even more important. 
Concerning Bayesian optimization, in addition to consider batch-sequential and other acquisition functions, 
perspectives include investigating non-stationary models to further improve the GP modeling aspect, based on the various properties uncovered.
Finally, a promising approach would be to hybridize REMBO with methods that learn the low-dimensional structure as in \cite{Garnett2014}. 

\section*{Acknowledgments}
We thank the anonymous reviewers for helpful comments on the earlier version of the paper.
Parts of this work have been conducted within the frame of the ReDice Consortium,
gathering industrial (CEA, EDF, IFPEN, IRSN, Renault) and academic (Ecole des Mines de Saint-Etienne, INRIA, and the University of Bern) partners around advanced methods for Computer Experiments.
M. B. also acknowledges partial support from National Science Foundation grant DMS-1521702.

\bibliographystyle{spmpsci}

\appendix

\section{Proofs}

\subsection{Properties of the convex projection}

We begin with two elementary properties of the convex projection onto the hypercube $\X = [-1,1]^D$:

\begin{Property}[Tensorization property]
\label{property:P1} 
$\forall \x \in \R^D$, $p_\X 
  \begin{pmatrix}
  x_1 \\ \dots \\ x_D
  \end{pmatrix}
  = 
  \begin{pmatrix}
  p_{[-1,1]}(x_1) \\ \dots \\ p_{[-1,1]}(x_D)
  \end{pmatrix}
  .$
\end{Property}

\begin{Property}[Commutativity with some isometries]
\label{property:P2} 
Let $q$ be an isometry represented by a diagonal matrix with terms $\varepsilon_i = \pm 1$, $1 \leq i \leq D$. Then, for all $\x \in \R^D$,  $p_\X(q(\x)) = 
  \begin{pmatrix}
  \varepsilon_1 p_{[-1,1]}(x_1) \\ \dots \\ \varepsilon_D p_{[-1,1]}(x_D)
  \end{pmatrix}
  = q(p_\X(\x))$.
\end{Property}

\subsection{Proof of Theorem \ref{prop:ZH}}
\label{ap:proofth1}
\begin{proof}
First, note that $\U$ is a closed set as a finite union of closed sets. Then, let us show that $p_\X(\A \U) = \Emb$. Consider $\x \in \Emb$, hence $|x_i| \leq 1$ and $\exists \y \in \R^d$ s.t. $\x = p_\X(\A \y)$. Denote $\vecb = \A \y$.  
We distinguish two cases:
\begin{enumerate}
  \item More than $d$ components of $\vecb$ are in $[-1,1]$. Then there exists a set $I \subset \{1, \dots, D \}$ of cardinality $d$ such that $\y \in \bigcap \limits_{i \in I} \Band_i = \Paral_I \subseteq \U$, implying that $\x \in p_\X(\A \U)$. 
\item $0 \leq k < d$ components of $\vecb$ are in $[-1,1]$. 
It is enough to consider that $\vecb \in [0, +\infty)^D$. 
Indeed, for any $\x \in \Emb$, any $\A \in \classA$, let $\boldsymbol{\varepsilon}$ be the isometry given by the diagonal $D \times D$ matrix $\boldsymbol{\varepsilon}$ with elements $\pm 1$ such that $\boldsymbol{\varepsilon} \vecx \in [0, +\infty)^D$. It follows that $\boldsymbol{\varepsilon}\vecb$ is in $[0, +\infty)^D$ too. 
Denote $\x' = \boldsymbol{\varepsilon} \x$, $\vecb' = \boldsymbol{\varepsilon} \vecb$ and $\A' = \boldsymbol{\varepsilon} \A$.  
Thus if $\exists \vecu \in \U$ such that $\x' = p_\X(\vecb') = p_\X(\A' \vecu)$, by property \ref{property:P2}: $\boldsymbol{\varepsilon} \x = \boldsymbol{\varepsilon} p_\X(\A \vecu)$ leading to $\x = p_\X(\vecb) = p_\X(\A \vecu)$. 
From now on, we therefore assume that $b_i \geq 0$, $1 \leq i \leq D$.
Furthermore, we can assume that $0 \leq b_1 \leq \dots \leq b_D$, from a permutation of indices. Hence $b_i > 1$ if $i > k$ and  $\x = (x_1 = b_1, \dots, x_k = b_k, 1, \dots, 1)^T$.\\

Let $\y' \in \R^d$ be the solution of $\A_{1, \dots,d} \y' = (b_1, \dots b_k, 1, \dots, 1)^T$ (vector of size $d$). Such a solution exists since $\A_{1, \dots,d}$ is invertible by hypothesis. Then define $\vecb' = \A \y'$, $\vecb' = (b_1, \dots, b_k,1, \dots, 1, b_{d+1}', \dots, b_D')^T$. We have $\vecb' \in \Ran(\A)$ and $\y' \in \Paral_{1, \dots, d} \subseteq \U$.
\begin{itemize}
  \item If $\min_{i \in \{d+1, \dots, D\}}(b_i') \geq 1$, then $p_\X(\vecb') = p_\X(\vecb) = \x$, and thus $\x = p_\X(\A \y') \in p_\X(\A \U)$.
\item Else, the set $L = \{i \in \N: d+1 \leq i \leq D \telque b'_i < 1\}$ is not empty. Consider $\vecc = \lambda \vecb' + (1-\lambda)\vecb$, $\lambda \in ]0,1[$. By linearity, since both $\vecb$ and $\vecb'$ belong to $\Ran(\A)$, $\vecc \in \Ran(\A)$.  
\begin{itemize}
  \item For $1 \leq i \leq k$, $c_i = x_i$.
  \item For $k+1 \leq i \leq d$, $c_i = \lambda + (1- \lambda)b_i \geq 1$ since $b_i > 1$.
  \item For $i \in \{d+1, \dots, D\} \setminus L$, $b'_i \geq 1$ and $b_i > 1$ hence $c_i \geq 1$.
  \item We now focus on the remaining components in $L$. For all $i \in L$, we solve $c_i = 1$, i.e., $\lambda b'_i + (1-\lambda) b_i = \lambda (b'_i - b_i) + b_i = 1$. The solution is $\lambda_i = \frac{b_i-1}{b_i - b'_i}$, with $b_i - b'_i > 0$ since $b'_i < 1$. Also $b_i - 1 > 0$ and $b_i - 1 < b_i - b'_i$ such that we have $\lambda_i \in ]0,1[$. Denote $\lambda^* = \min_{i \in L} \lambda_i$ and the corresponding index $i^*$. By construction, $c_{i^*} = 1$ and $\forall i \in L$, $c_i = \lambda^* (b'_i - b_i) + b_i \geq \lambda_i (b'_i - b_i) + b_i = 1$ since $\lambda_i \geq \lambda^*$ and $b'_i - b_i < 0$.
\end{itemize}

To summarize, we can construct $\vecc^*$ with $\lambda^*$ that has $k + 1$ components in $[-1,1]$ (the first $k$ and the $i^{*th}$ ones), the others are greater or equal than 1. Moreover, $\vecc^* \in \Ran(\A)$ and fulfills $p_\X(\vecc^*) = p_\X(\vecb) = \x$ by Property \ref{property:P1}. If $k+1 = d$, this corresponds to case 1 above, otherwise, it is possible to reiterate by taking $\vecb = \vecc$. Hence we have a pre-image of $\x$ by $\phi$ in $\U$.   

\end{itemize}   
\end{enumerate}
Thus the surjection property is shown. There remains to show that $\U$ is the smallest closed set achieving this, along with additional topological properties.\\

Let us show that any closed set $\Y \in \R^d$ such that $p_\X(\A \Y) = \Emb$ contains $\U$. To this end, we consider $\U^* = \bigcup \limits_{I \subseteq \{1, \dots, D\}, |I| = d} \mathring{\Paral}_I$ with $\mathring{\Paral}_I = \set{\y \in \R^d \telque \forall i \in I, -1 < \A_i \y < 1}$, the interior of the parallelotopes. We have $\restriction{\phi}{\mathring{U}}$ bijective. Indeed, all $\x \in p_\X(\A \U^*)$ have (at least) $d$ components whose absolute value is strictly lower that $1$. Without loss of generality, we suppose that they are the $d$ first ones, $I = \{1, \dots, d\}$. Then there exists a \emph{unique} $\y \in \R^d$ s.t. $\x = p_\X(\A \y)$ because $\x_I = (\A \y)_I = \A_I \y$ has a unique solution with $\A_I$ is invertible.  
Since $\Y$ is in surjection with $\Emb$ for $\restriction{\phi}{\Y}$ and $\restriction{\phi}{\U^*}$ is bijective, $\U^* \subseteq \Y$. Additionally, $\Y$ is a closed set so it must contain the closure $\U$ of $\U^*$.\\ 

Finally let us prove the topological properties of $\U$. Recall that parallelotopes $\Paral_I$ $(I \subseteq \{1, \dots, D \})$ are compact convex sets as linear transformations of $d$-cubes.\\ 
Thus $\I = \bigcap\limits_{I \subseteq \{1, \dots, D \}, |I| = d} \Paral_I$ is a compact convex set as the intersection of compact convex sets, which is non empty ($O \in \I$). It follows that $\U = \bigcup\limits_{I \subseteq \{1, \dots, D \}, |I| = d} \Paral_I$ is compact and connected as a finite union of compact connected sets with a non-empty intersection, i.e., $\I$. Additionally $\U$ is star-shaped with respect to any point in $\I$ (since $\I$ belongs to all parallelotopes in $\U$). 
\end{proof}

\subsection{Proof of Proposition \ref{prop:eq_zon}}
\label{ap:proof2}
\begin{proof}

It follows from Definition \ref{def:zon} that $p_\A(\X)$ is a zonotope of center $O$, obtained from the orthogonal projection of the $D$-hypercube $\X$. As such, $p_\A(\X)$ is a convex polytope.

Since $\Emb \subset \X$, it is direct that $p_\A(\Emb) \subseteq p_\A(\X)$.

To prove $p_\A(\X) \subseteq p_\A(\Emb)$, let us start by vertices. 
Denote by $\x \in \R^D$ a vertex of $p_\A(\X)$.\\ 
If $\x \in \X$, then $p_\A(p_\X(\x)) = p_\A(\x) = \x$, i.e., $\x$ has a pre-image in $\Emb$ by $p_\A$.\\
Else, if $\x \notin \X$, consider the vertex $\vecv$ of $\X$ such that $p_\A(\vecv) = \x$. 
Suppose that $\vecv \notin \Emb$. Let us remark that if $\vecv$ is a vertex of $\X$ such that $\vecv \notin \Emb$, then $\Ran(\A) \cap H_v = \emptyset$, where $H_v$ is the open hyper-octant (with strict inequalities) that contains $\vecv$. Indeed, if $\x \in \Ran(\A) \cap H_v$, $\exists k \in \R^*$ such that $p_\X(k \x) = \vecv$, which contradicts $\vecv \notin \Emb$. Denote by $\vecu$ the intersection of the line $(O\x)$ with $\X$, since $\x \notin H_v$, $\vecu \notin H_v$ either, hence $\widehat{\x \vecu \vecv} > \pi/2$. Then $\|\vecu - \vecv\| \leq \|\x - \vecv\|$, which contradicts $\x = p_\A(\vecv)$. 
Hence $\vecv \in \Emb$ and $\x$ has a pre-image by $p_\A$ in $\Emb$.

Now, suppose $\exists \x \in p_\A(\X)$ such that its pre-image(s) in $\X$ by $p_\A$ belong to $\X \setminus \Emb$.
Denote $\x' \in p_\A(\X)$ the closest vertex of $p_\A(\X)$, which has a pre-image in $\Emb$ by $p_\A$.
By continuity of $p_\A$, there exists $\x'' \in [\x, \x']$ with pre-image in $(\X \setminus \Emb) \cap \Emb = \emptyset$, hence there is a contradiction since $\x''$ has at least one pre-image. Consequently $\x$ has at least a pre-image in $\Emb$, and $p_\A(\X) \subseteq p_\A(\Emb)$.
\end{proof}

\subsection{Proof of Theorem \ref{th:2}}
\label{ap:proof3}
\begin{proof}

As a preliminary, let us show that $\forall \y \in \Z$, $\gamma(\y) \in \Emb$.
Let $\x_1 \in \X \cap p_\A^{-1}(\B^\top \y) (\neq \emptyset)$.
From Proposition \ref{prop:eq_zon}, $\y$ also have a pre-image $\x_2 \in \Emb$ by $p_\A$,
and denote $\vecu \in \Ran(\A)$ such that $p_\X(\vecu) = \x_2$, i.e.,
$\| \x_2 - \vecu \| = \min \limits_{\x \in \X} \| \x - \vecu \|$. 
We have $\| \x_1 - \vecu \|^2 = \| \x_1 - \B^\top \y \|^2 +  \| \B^\top \y - \vecu \|^2$ and $\| \x_2 - \vecu \|^2 = \| \x_2 - \B^\top \y \|^2 +  \| \B^\top \y - \vecu \|^2$ as $\x_1, \x_2 \in p_\A^{-1}(\B^\top \y)$.
Then, $\| \x_2 - \vecu \| \leq \| \x_1 - \vecu \| \Rightarrow \| \x_2 - \B^\top \y \| \leq \| \x_1 - \B^\top \y \|$ with equality if $\x_1 = \x_2$, so that $\gamma(\B^\top \y) \in \Emb$.

We now proceed by showing that $\gamma$ defines a bijection from $\Z$ to $\Emb$, with $\gamma^{-1} = \B$.
First, $\forall \y \in \Z$, $\B \gamma (\y) = \y$ by definition of $\gamma$. 
It remains to show that, $\forall \x \in \Emb$, $\gamma(\B \x) = \x$.
Let $\x \in \Emb$, $\vecu \in \Ran(\A)$ such that $p_\X(\vecu) = \x$. 
Suppose $\gamma(\B \x) = \x' \in \Emb$, $\x' \neq \x$, in particular $\| \x' - \B^\top \B \x \| < \| \x - \B^\top \B \x \|$. Again, $\x, \x' \in p_\A^{-1}(\B^\top \B \x)$, hence $\| \x' - \B^\top \B \x \|^2 + \|  \B^\top \B \x - \vecu \|^2 = \| \x' - \vecu\|^2  < \| \x - \B^\top \B \x \|^2 + \|  \B^\top \B \x - \vecu\|^2 = \| \x - \vecu\|^2$ which contradicts $\x = p_\X(\vecu)$. Thus $\gamma(\B \x) = \x$.

$\Z$ is compact, convex and centrally symmetric from being a zonotope, see Definition \ref{def:zon}. 
Finally, any smaller set than $\Z$ would not have an image through $\gamma$ covering $\Emb$ entirely, which concludes the proof. 
\end{proof}

\subsection{Proof of Proposition \ref{prop:vols}}
\label{ap:proof4}
\begin{proof}
The first part directly follows from the properties of the convex and orthogonal projection. 
In detail: $\Vol_d(\A \U) \geq \Vol_d(p_\X(\A \U)) = \Vol_d(\Emb) \geq \Vol_d(p_\A(\Emb)) = \Vol_d(\A\Z)$.

For the second part, we need the length of a strip $\Band_i$, i.e., the inter hyperplane distance: $l_i = 2 / \| \A_i \|$. Recall that $\B = \A^\top$, that rows of $\A$ have equal norm and $\A$ orthonormal. Then, following the proof of \cite[Theorem 1]{Filliman1988}, $\sum \limits_{j = 1}^d \| \B_j \|^2 = d$ (orthonormality) $= \sum \limits_{i = 1}^D \sum \limits_{j = 1}^d A_{i,j}^2 = \sum \limits_{i = 1}^D \| \A_i \|^2 = D  \| \A_1 \|^2$, hence $\| \A_1 \| = \sqrt{d/D}$.  As $\Z$ is enclosed in the $d$-sphere of radius $\sqrt{D}$ and the $d$-sphere of radius $\sqrt{D/d}$ is enclosed in $\I$, the result follows from the formula of the volume of a $d$-sphere of radius $\rho$: $\frac{\pi^{d/2}}{\Gamma(d/2 + 1)} \rho^d$ .    
\end{proof}

\section{Main notations}

\begin{table}[h]
\begin{tabular}{ll}
 $d$ & low embedding dimension\\
 $D$ & original dimension, $D \gg d$\\
 $\A$ & random embedding matrix of size $D \times d$\\
 $\B$ & transpose of $\A$ after orthonormalization\\
 $\X$ & search domain $[-1,1]^D$ \\
 $\Y$ & low dimensional optimization domain, in $\R^d$\\
 $\Z$ & zonotope $\B \X$\\ 
 $p_\X$ & convex projection onto $\X$\\
 $p_\A$ & orthogonal projection onto $\Ran(\A)$\\
 $\Psi$ & warping function from $\R^d$ to $\Ran(\A)$\\
 $\phi$ & mapping from $\Y \subset \R^d$ to $\R^D$\\
 $\gamma$ & mapping from $\Z \subset \R^d$ to $\R^D$\\
 $(\pbR)$ & optimization problem for REMBO with mapping $\phi$\\
 $(\pbR')$ & optimization problem for REMBO with mapping $\gamma$\\
 $(\pbQ)$ & minimal volume problem definition with mapping $\phi$\\
 $(\pbQ')$ & minimal volume problem definition with mapping $\gamma$\\ 
 $\boxx$ & box enclosing $\Z$\\
 $\Emb$ & image of $\R^d$ by $\phi$\\
 $\Band_i$ & strip associated with the $i^{th}$ row of $\A$\\
 $\I$ & intersection of all strips $\Band_i$\\
 $\U$ & union of all intersection of $d$ strips $\Band_i$\\
 $\Paral_I$ & parallelotope associated with strips in the set $I$\\

\end{tabular}
\end{table}

\end{document}